\theoremstyle{plain}
\newtheorem{lemma}{Lemma}
\newtheorem{theorem}{Theorem}
\newtheorem{cor}{Corollary}
\theoremstyle{definition}
\theoremstyle{remark}
\newtheorem{remark}{Remark}
\newtheorem{exmp}{Example}
\newcommand{\cR}{\mathcal R}
\newcommand{\R}{\mathbb{R}}
\newcommand{\RR}{\mathbb{R}}
\newcommand{\card}[1]{\left|#1\right|}
\title{Recognizing Treelike $k$"=Dissimilarities}
\author{Sven Herrmann \and Katharina T. Huber 
\and Vincent Moulton \and Andreas Spillner}
\begin{document}
\maketitle
\let\thefootnote\relax\footnotetext{K.T.H., S.H. and V.M. thank Charles Semple and Mike Steel for
their hospitality at University of Canterbury, where  part of this
work was undertaken. V.M. also thanks the Royal Society for its support.
S.H. also thanks the German Academic Exchange Service (DAAD) for its support
by a fellowship within its Postdoc-Programme.\\
The authors would like to thank the anonymous referees for their helpful comments.
}

\begin{abstract}
A $k$-dissimilarity $D$ on a finite set $X$, $|X|\ge k$, 
is a map from the set of size $k$ subsets of $X$ to the real numbers. 
Such maps naturally arise from 
edge-weighted trees $T$ with leaf-set $X$: Given a subset $Y$ 
of $X$ of size $k$, $D(Y)$ is defined to be the total length of the 
smallest subtree of $T$ with leaf-set $Y$. In case 
$k=2$, it is well-known that $2$-dissimilarities
arising in this way can be characterized by the 
so-called ``4-point condition''. However, 
in case $k>2$ Pachter and Speyer recently posed
the following question: Given an arbitrary $k$-dissimilarity, how do
we test whether this map comes from a tree?  In this paper, 
we provide an answer to this question, showing that for $k \ge 3$
a $k$-dissimilarity on a set $X$ arises from a tree 
if and only if its restriction to every $2k$-element subset of $X$ 
arises from some tree,
and that $2k$ is the least possible 
subset size to ensure that this is the case.
As a corollary, we show that there exists a polynomial-time 
algorithm to determine when a $k$-dissimilarity arises from a tree.
We also give a $6$-point condition for determining when
a $3$-dissimilarity arises from a tree, that is similar to the 
aforementioned $4$-point condition.
\end{abstract}

\section{Introduction}

In phylogenetics, as well as other areas making 
use of classification techniques,
many distance-based methods for constructing trees are based on
the following fundamental observation. 
For $X$ a non-empty finite set, 
a graph-theoretical tree $T=(V,E)$ with leaf-set $X \subseteq V$ 
and non-negative edge-weighting $\omega:E \to \R$
can be encoded in terms of the restriction of the
{\em pairwise dissimilarity $d_{(T,\omega)}$} to \(X\), where
$d_{(T,\omega)}(u,v)$ denotes the length of the 
shortest path in $T$ between $u$ and $v$ ($u,v \in V$).
In other words, the tree $T$ 
can be completely recovered from the matrix of pairwise 
values $(d_{(T,\omega)})_{x,y \in X}$.
Such dissimilarities are commonly called 
``tree metrics'' and there is an extensive 
literature concerning their properties 
(see e.g.  \cite{sem-ste-03a} and \cite{gor-87} for overviews).

Various methods have been proposed for constructing trees that exploit 
this observation. These essentially work by projecting an arbitrary 
pairwise dissimilarity onto some ``nearby'' tree metric 
(see e.g. \cite{fel-03,desoete-83}). Even so, it is well-known that such methods
can suffer from the fact that pairwise distance estimates 
involve some loss of information (see e.g. page 176 in \cite{fel-03}).
As a potential solution to this problem, \cite{pac-spe-04a} 
proposed using $k$-\emph{wise} distance estimates, $k \ge 3$, to 
reconstruct trees, an approach which they subsequently
implemented in \cite*{lev-yos-06a} (see also
\cite{gri-99a} where a related idea was investigated). Their rationale
was that $k$-wise estimates are potentially more accurate since they 
can capture more information than 
pairwise distances, a point that was also made 
in Chapter~12 of~\cite{fel-03}.

To describe Pachter and Speyer's approach, recall that
a {\em phylogenetic tree (on $X$)} is 
a graph-theoretical tree \(T = (V,E)\)
in which every non-leaf vertex has degree 
at least three and whose leaf-set is \(X\)
(cf. Figure~\ref{figure:rooted:example:intro}(a)).
In case a real-valued weight \(\omega(e)\) is associated 
to every edge \(e\) of \(T\), we call 
$T$ a \emph{weighted phylogenetic tree}, and
we usually denote such a tree  
by \((T,\omega)\).  Now, for
any $k$-element subset $Y \subseteq X$, $k\ge 2$,
let $D^k_{(T,\omega)}(Y)$ denote the total edge-weight of the 
smallest subtree of $T$ with leaf-set $Y$ 
(cf. Figure~\ref{figure:rooted:example:intro}(b)). 
Note that this quantity is 
sometimes called  the ``phylogenetic diversity'' 
of $Y$ (see e.g. \cite{faith:pd:1992} and \cite{steel:greedy}) and that, 
for $k=2$, $D^2_{(T,\omega)}(\{x,y\}) = d_{(T,\omega)}(x,y)$ for
all $x,y \in X$. 

In \cite{pac-spe-04a}, the following result is proven:

\begin{figure}
\centering
\includegraphics[scale=1.0]{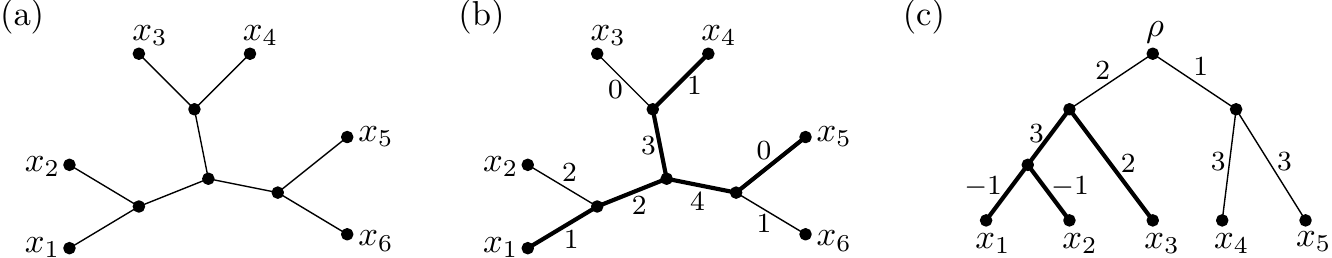}
\caption{(a) A phylogenetic tree \(T=(V,E)\) on
             \(X = \{x_1,x_2,\dots,x_6\}\).
         (b) A non-negative edge-weighting \(\omega\) 
             for the phylogenetic tree \(T\) in (a). The edges
             in the smallest subtree of $T$ containing the set
             \(Y := \{x_1,x_4,x_5\}\) are drawn bold and their
             total weight is \(D^3_{(T,\omega)}(Y) = 11\).
         (c) A weighted, rooted phylogenetic tree \((T,\rho,\omega)\)
             on \(X=\{x_1,x_2,\dots,x_5\}\) with an equidistant
             edge-weighing \(\omega\). The edges whose
             weight contribute to \(D^3_{(T,\omega)}(\{x_1,x_2,x_3\}) = 3\)
             are drawn bold.}
\label{figure:rooted:example:intro}
\end{figure}

\begin{theorem} \label{thm:k-diss-unique}
Let $(T,\omega)$ be a weighted phylogenetic tree on $X$
with \(\omega\) non-negative and \(\omega(e)>0\)
for every edge \(e\) of \(T\) that is not incident
to a leaf, $|X|=n$, and $k \ge 2$ be some integer. 
If $n \ge 2k-1$, then $(T,\omega)$ is determined by the map 
$D^k_{(T,\omega)}$ (and it is not if $2k-2 = n > 2$).
\end{theorem}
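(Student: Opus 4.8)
The plan is to work throughout with the edge-split representation of $D^k$. For each edge $e$ of $T$, deleting $e$ partitions $X$ into two blocks $A_e\mid B_e$, and $e$ lies on the smallest subtree spanning $Y$ exactly when $Y$ meets both blocks. Writing $\chi_e(Y)=1$ in that case and $\chi_e(Y)=0$ otherwise, one gets
\[
D^k_{(T,\omega)}(Y)=\sum_{e}\omega(e)\,\chi_e(Y),\qquad \chi_e(Y)=1-\mathbf 1_{[Y\subseteq A_e]}-\mathbf 1_{[Y\subseteq B_e]}.
\]
The whole argument turns on one observation: $\chi_e$ is the constant function $1$ on all $k$-subsets precisely when $Y$ fits into neither block, i.e. when $|A_e|\le k-1$ and $|B_e|\le k-1$; since $|A_e|+|B_e|=n$ this is possible if and only if $n\le 2k-2$. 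Thus for $n\ge 2k-1$ every edge has a block of size at least $k$, so each $\chi_e$ genuinely varies over the $k$-subsets, whereas at $n=2k-2$ a balanced edge contributes the same amount to every $D^k(Y)$ --- and this single degeneracy is exactly what makes $D^k$ fail to determine $(T,\omega)$.

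For the forward direction I would reduce everything to recovering the induced pairwise metric $d_{(T,\omega)}=D^2_{(T,\omega)}$. Indeed, a tree metric satisfying the $4$-point condition is realized by a \emph{unique} weighted phylogenetic tree with strictly positive interior weights and non-negative pendant weights; hence once $d_{(T,\omega)}$ is known, both the topology and all edge weights of $(T,\omega)$ are determined at once, and no separate weight-recovery step is needed. So the entire problem becomes: express $d(x,y)$ through the values of $D^k$ when $n\ge 2k-1$. For $k=3$ this is clean and linear: using any three further leaves $a,b,c$ one checks the identity
\[
d(x,y)=\tfrac13\Big(2\!\!\sum_{p\in\{a,b,c\}}\!\!D^3(\{x,y,p\})-\sum_{\{p,q\}\subseteq\{a,b,c\}}\big(D^3(\{x,p,q\})+D^3(\{y,p,q\})\big)+2\,D^3(\{a,b,c\})\Big),
\]
which one verifies by expanding each $D^3$ of a triple as half the sum of its three pairwise distances.

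For general $k$ the recovery of $d$ is the heart of the matter, and I expect it to be the main obstacle. A purely formulaic approach is ruled out: the constraint system that a universal linear combination $\sum_Y c_Y D^k(Y)=d(x,y)$ would have to satisfy is invariant under permuting the leaves other than $x,y$ and under swapping $x$ and $y$, so by averaging any solution one may assume $c_Y$ depends only on $|Y\cap\{x,y\}|$; feeding this ansatz into the split representation yields binomial identities that are solvable for $k\le 3$ but inconsistent for $k\ge 4$. Hence the recovery must exploit the actual structure of the unknown tree rather than a fixed linear combination, and the delicate point is to show it still goes through at exactly $n=2k-1$, where the dichotomy above first guarantees that every split has a side large enough to host a $k$-subset. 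This is the step I would spend the most effort on, most plausibly by first recovering the tree's splits (via a four-point-style test on $k$-sets) and then reading off the metric.

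The converse, that $D^k$ fails to determine $(T,\omega)$ when $n=2k-2>2$, I would settle by an explicit perturbation. Choose any topology on the $2k-2$ leaves containing an interior edge $e_0$ whose removal splits $X$ into two $(k-1)$-element blocks; such trees exist once $k\ge 3$. By the observation, $\chi_{e_0}\equiv 1$, so $e_0$ adds the constant $\omega(e_0)$ to every $D^k(Y)$, while each pendant edge at a leaf $\ell$ has $\chi(Y)=\mathbf 1_{[\ell\in Y]}$, so raising every pendant weight by $\epsilon$ increases each $D^k(Y)$ by exactly $k\epsilon$ (every $Y$ contains $k$ leaves). Consequently the perturbation that lowers $\omega(e_0)$ by $k\epsilon$ and raises each pendant weight by $\epsilon$ leaves $D^k$ unchanged; for small $\epsilon>0$ the interior weight stays positive and the pendant weights stay non-negative, producing two distinct weighted trees with identical $k$-dissimilarity.
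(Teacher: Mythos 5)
First, a remark on the comparison you asked for: the paper does not actually prove this statement. It is quoted from Pachter and Speyer \cite{pac-spe-04a}, and the only information the paper reveals about that proof is the observation (inside the proof of Lemma~\ref{prop:2k-2-unique}) that its crucial ingredient is precisely the dichotomy you isolate, namely that when $|X|\ge 2k-1$ every edge of $T$ induces a split with at least one side containing $k$ or more leaves. To that extent your framework is on target: the decomposition $D^k_{(T,\omega)}(Y)=\sum_e\omega(e)\chi_e(Y)$, the observation that $\chi_e\equiv 1$ on $\binom{X}{k}$ exactly when both sides of the split have at most $k-1$ leaves, and your perturbation for $n=2k-2$ (which is the unrooted version of the weighting $\omega_{(k,\alpha)}$ of Lemma~\ref{lem:Tkalpha}, cf.\ Example~\ref{exmp:treelike}) are all correct. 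Your $k=3$ recovery formula for $d(x,y)$ also checks out, since $D^3$ of a triple equals half its perimeter.

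However, the forward direction for general $k$ --- which is the actual content of the theorem --- is not proved. You reduce it to recovering $d_{(T,\omega)}$ from $D^k_{(T,\omega)}$, correctly argue that no tree-independent linear formula $\sum_Y c_Y D^k(Y)=d(x,y)$ can exist for $k\ge 4$, and then state that the recovery ``must exploit the actual structure of the unknown tree,'' to be done ``most plausibly by first recovering the tree's splits (via a four-point-style test on $k$-sets) and then reading off the metric.'' That sentence is the whole theorem for $k\ge 4$. To close it you would need to (a) exhibit a test, computable from the values of $D^k$ alone, that decides which bipartitions of $X$ are splits of $T$; (b) show that it is valid precisely under the hypothesis $n\ge 2k-1$, which is where the property that every split has a side of size at least $k$ must be used; and (c) recover the edge weights, which --- by your own symmetrization argument --- cannot be done by a universal linear inversion either. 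None of these steps is carried out, or even sketched concretely enough to assess, so the proposal has a genuine gap at its core; only the sharpness claim (the parenthetical for $n=2k-2>2$) and the case $k\le 3$ are actually established.
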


In other words, just as in the case $k=2$, for $k \ge 3$
it is possible to recover $(T,\omega)$ from
the function $D^k_{(T,\omega)}$ that maps the set of subsets of $X$
of size $k$ (denoted $\binom{X}{k}$) to~$\RR$.
Here we call any map \(D:\binom{X}{k} \rightarrow \mathbb{R}\)
a {\em $k$"=dissimilarity}. Note that 
$3$"=dissimilarities have been investigated, for example,
in \cite{hay-72a}, \cite{jol-cal-95}
and \cite{hei-ben-97}, and arbitrary $k$"=dissimilarities in
\cite{dez-ros-00} and \cite{war-10}, under names
such as $k$-\emph{way dissimilarities}, 
$k$-\emph{way distances} and $k$"=\emph{semimetrics}
(see also \cite{ban-dre-94} for related work).

In this paper we shall provide a solution to the following
problem raised in \cite{pac-spe-04a}:

\begin{quote}
``However, if we are simply given a $k$"=dissimilarity map
$D :\binom{X}{k} \to \mathbb{R}$, we do not know how to test 
whether this map comes from a phylogenetic tree.''
\end{quote}

Note that \cite{dre-ste-07} study the related problem
of characterizing when a map \(D\) from the set
of subsets of \(X\) of size \emph{at most} $k$ into 
some Abelian group $G$ can be represented by a
phylogenetic tree on $X$ whose edges are assigned
elements from~$G$. However,  we consider 
subsets of \(X\) of size \emph{precisely} \(k\), leading
to a quite different characterization.

In order to state the main result of this paper, we 
first recall some more definitions concerning phylogenetic trees.
A \emph{rooted} phylogenetic tree (on $X$), is a
tree \(T=(V,E)\) with (i) a distinguished 
vertex \(\rho\), called the \emph{root} of \(T\),
that has degree at least~2, (ii) leaf-set \(X\) and
(iii) no vertex in \(V \setminus (X \cup \{\rho\})\)
with degree less than~3.
In case a real-valued weight \(\omega(e)\) is associated 
to every edge \(e \in E \), we call 
$T$ a \emph{weighted}, rooted phylogenetic tree,
and denote it by \((T,\rho,\omega)\). 
Note that, for such a tree, we define 
the maps $d_{(T,\omega)}$ and 
$D^k_{(T,\omega)}$ in the same way as for (unrooted) phylogenetic trees
(cf. Figure~\ref{figure:rooted:example:intro}(c)).
In addition, we call an edge-weighting \(\omega\) of $T$ \emph{equidistant} 
if (i) \(d_{(T,\omega)}(x,\rho) = d_{(T,\omega)}(x',\rho)\)
for all \(x,x' \in X\), and 
(ii) \(d_{(T,\omega)}(x,u) \leq d_{(T,\omega)}(x,v)\)
for all \(x \in X\) and any \(u,v \in V \)
that lie on the path from \(x\) to \(\rho\) in
\(T\) which first meets $u$ and then \(v\) 
(cf. Figure~\ref{figure:rooted:example:intro}(c)).  
Such weightings commonly arise when modeling
sequence evolution assuming a molecular clock
(see e.g. \cite{fel-03}).

Now, we call a $k$-dissimilarity $D$ 
{\em treelike} if there exists a 
weighted phylogenetic tree $(T,\omega)$ 
with \(\omega\) non-negative such that 
$D=D^k_{(T,\omega)}$ holds, and we call $D$ 
\emph{equidistant} if there exists
a weighted, rooted phylogenetic 
tree \((T,\rho,\omega)\) on \(X\) with \(\omega\)
equidistant such that \(D=D^k_{(T,\omega)}\) holds.
In this paper, we shall prove the following:

\begin{theorem}\label{main}
Let $k\geq 2$ and $D$ be a \(k\)"=dissimilarity map 
on a set \(X\) with \(|X| \geq 2k\). Then 
\(D\) is treelike/equidistant if and only if the
restriction of \(D\) to every \(2k\)-element 
subset of \(X\) is treelike/equidistant.
Moreover, for all $k\ge 3$ there exist $k$"=dissimilarity 
maps whose restrictions to every $(2k-1)$-element 
subset of~$X$ are treelike/equidistant but that are not 
treelike/equidistant.
\end{theorem}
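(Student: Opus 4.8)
The plan is to prove the two halves of Theorem~\ref{main} separately: the characterisation (the ``if and only if'') by induction on $|X|$, and the optimality of the bound $2k$ by an explicit construction. The ``only if'' direction is routine: if $D=D^k_{(T,\omega)}$ and $Y\subseteq X$, then the restriction of $D$ to $\binom{Y}{k}$ is realised by the weighted phylogenetic tree $T|_Y$ obtained from the minimal subtree of $T$ spanning $Y$ by suppressing degree-two vertices and summing $\omega$ along suppressed paths; in the equidistant case one retains the root and checks that equidistance is inherited. Hence restrictions of treelike (resp.\ equidistant) maps are treelike (resp.\ equidistant). For the converse I would induct on $n=|X|$, the base case $n=2k$ being the hypothesis. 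Assume $n\ge 2k+1$ and that every $2k$-subset restriction of $D$ is treelike. For each $c\in X$ the restriction $D_c:=D|_{\binom{X\setminus c}{k}}$ again has all its $2k$-subset restrictions treelike, and $|X\setminus c|=n-1\ge 2k$, so by the inductive hypothesis $D_c$ is treelike, realised by a tree $(T_c,\omega_c)$ on $X\setminus c$ that is \emph{unique} by Theorem~\ref{thm:k-diss-unique} (applicable since $n-1\ge 2k-1$). Since for distinct $b,c$ both $T_b|_{X\setminus\{b,c\}}$ and $T_c|_{X\setminus\{b,c\}}$ realise $D|_{\binom{X\setminus\{b,c\}}{k}}$ and $n-2\ge 2k-1$, uniqueness forces $T_b|_{X\setminus\{b,c\}}=T_c|_{X\setminus\{b,c\}}$, so $\{T_c\}_{c\in X}$ is a pairwise-consistent family.

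The core of the ``if'' direction, and the step I expect to be most delicate, is to amalgamate this family into a single weighted tree $(T,\omega)$ on $X$ with $T|_{X\setminus c}=T_c$ for all $c$; once this is done every $Y\in\binom{X}{k}$ omits some leaf $c$ (as $k<n$), whence $D^k_{(T,\omega)}(Y)=D^k_{(T_c,\omega_c)}(Y)=D(Y)$, so $D$ is treelike. To build $T$, fix a leaf $a$ and reinsert it into $T_a$: its topological branch point and pendant edge length are determined by the values $D(Y)$ with $a\in Y$ together with the known tree $T_a$ on $X\setminus a$, and each auxiliary tree $T_b$ ($b\ne a$) exhibits this attachment explicitly. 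The two things to verify are that the attachment is well defined, i.e.\ independent of the auxiliary $b$, and that the resulting $T$ restricts to every $T_c$; both follow from pairwise consistency, using that for $k\ge 3$ (so $n\ge 2k+1\ge 7$) the overlap $X\setminus\{a,b,b'\}$ retains at least $2k-2\ge 4$ leaves, enough to pin down the insertion of $a$ unambiguously. For $k=2$ this reduces to the classical four-point characterisation of tree metrics. The equidistant case runs identically with rooted trees, an equidistant version of Theorem~\ref{thm:k-diss-unique}, and the root retained throughout.

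For the optimality statement the key observation is that the non-uniqueness clause of Theorem~\ref{thm:k-diss-unique}, valid precisely when $2k-2=n>2$, holds exactly when $k\ge 3$. I would therefore start from a set $Z$ with $|Z|=2k-2$ carrying two \emph{distinct} weighted phylogenetic trees $(T,\omega)$ and $(T',\omega')$ with $D^k_{(T,\omega)}=D^k_{(T',\omega')}=:D_Z$, which exist because $2k-2>2$. Adjoining two new leaves $a,b$ to form $X=Z\cup\{a,b\}$ with $|X|=2k$, I would define a $k$-dissimilarity $D$ on $X$ whose restriction to $X\setminus b$ equals the treelike map of $T$ with $a$ attached, and whose restriction to $X\setminus a$ equals the treelike map of $T'$ with $b$ attached; these are consistent on $\binom{Z}{k}$ since both restrict to $D_Z$ there, and the values of $D$ on the $k$-subsets containing both $a$ and $b$ remain free. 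If $D$ were treelike via a tree $S$ on $X$, then since $|X\setminus a|=|X\setminus b|=2k-1$, Theorem~\ref{thm:k-diss-unique} would identify $S|_{X\setminus a}$ with ``$T'$ with $b$'' and $S|_{X\setminus b}$ with ``$T$ with $a$'', forcing $S|_Z=T'$ and $S|_Z=T$ simultaneously, contradicting $T\ne T'$; hence $D$ is not treelike. The main obstacle here is to fix the remaining free values, i.e.\ the attachments of $a$ and $b$, so that the other $(2k-1)$-subset restrictions, those obtained by deleting a leaf $z\in Z$, are \emph{also} treelike: deleting a point of $Z$ may partially resolve the $T$-versus-$T'$ ambiguity, so one must choose $T,T'$ (for instance the smallest ambiguous pair underlying Theorem~\ref{thm:k-diss-unique}) and the two attachment points so that each of the $2k$ leave-one-out maps is genuinely realised by a tree, which I expect to require an essentially explicit, case-by-case verification. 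The equidistant counterexample is obtained the same way from an ambiguous equidistant pair on $2k-2$ leaves.
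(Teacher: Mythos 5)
Your overall architecture (split into the characterisation and the sharpness of $2k$, with the ``only if'' direction being routine restriction) matches the paper, but both of your core arguments take genuinely different routes, and in both cases the step you flag as ``delicate'' is exactly the step that is left unproved.

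For the characterisation, the paper does not induct on $|X|$ and does not amalgamate the leaf-deleted trees $T_c$ directly. Instead (Theorem~\ref{thm:2k-treelike}) it uses the uniqueness statement (Theorem~\ref{thm:k-diss-root-unique}, i.e.\ Theorem~\ref{thm:k-diss-unique}) only to show that the pairwise distance $d_{(T_Z,\omega_Z)}(a,b)$ is the same for every $(2k-1)$-set $Z$ containing $a,b$; this yields a well-defined $2$-dissimilarity $\delta$ on $X$ whose restriction to every $4$-set is treelike, and the classical $k=2$ theorem then produces a single tree $(T,\omega)$ with $D^2_{(T,\omega)}=\delta$, whose $2k$-restrictions must coincide with the $(T_Y,\omega_Y)$ by uniqueness, whence $D^k_{(T,\omega)}=D$. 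In other words, the paper outsources your amalgamation problem to the $k=2$ case. Your leaf-reinsertion amalgamation is a plausible alternative, but the two claims you defer to ``pairwise consistency'' --- that the attachment point and pendant length of $a$ are independent of the auxiliary tree $T_b$, and that the assembled $T$ restricts to every $T_c$ --- are the entire content of the converse direction and are asserted rather than proved. As written this is a gap, albeit one I would expect to be closable with a quartet/amalgamation argument of the kind you sketch.

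For sharpness, the gap is more serious. The paper's counterexample (Lemma~\ref{lem:Tkalpha} and Examples~\ref{exmp:equidistant}, \ref{exmp:treelike}) does not adjoin new leaves to an ambiguous pair on $2k-2$ points; it takes a single rooted tree whose root separates $X$ into two blocks $L(T_u)$, $L(T_v)$ of size at least $k$ each, and perturbs the weighting by $+\alpha/k$ on pendant edges and $-\alpha/2$ on the two root edges. The point of Lemma~\ref{lem:Tkalpha} is that this changes $D^k(A)$ by $\alpha$ precisely when $A$ lies entirely inside one block; defining $D$ to use the perturbed weighting only on $k$-subsets of $L(T_u)$ then gives a map whose restriction to any $(2k-1)$-set is realised by one of the two weightings (such a set cannot contain $k$-subsets of both blocks), while a $2k$-set meeting each block in exactly $k$ elements detects the inconsistency via a uniqueness argument like yours. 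In your construction, by contrast, the values of $D$ on $k$-sets containing both new leaves $a$ and $b$ are left ``free'', and the treelikeness of the $2k-2$ restrictions $D|_{X\setminus z}$, $z\in Z$ --- which is where all those free values are constrained --- is explicitly not verified. That verification is not a routine case check: it is the actual difficulty, and it is not clear your scheme can be completed without in effect rediscovering the weight-perturbation construction. So while your contradiction argument for non-treelikeness (forcing $S|_Z\cong T$ and $S|_Z\cong T'$ via uniqueness on the two $(2k-1)$-sets $X\setminus a$ and $X\setminus b$) is sound and close in spirit to the paper's, the existence of the counterexample itself has not been established by your argument.
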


Note that for the case \(k=2\) this result is well-known 
(see e.g. \citet[Theorem 7.2.5 and 
Corollary 7.2.7]{sem-ste-03a}).

After presenting some preliminaries in
the next section, we prove Theorem~\ref{main} in
Sections~\ref{sec:treelike} and \ref{sec:examples}. As a 
corollary of Theorem~\ref{main}, we
also show that, for fixed $k \ge 2$,
there is an algorithm with run-time that is 
bounded by a polynomial in \(|X|\)
to decide if an arbitrary $k$-dissimilarity \(D\) 
is treelike (Corollary~\ref{polycheck}). It 
would  be interesting to know if such 
algorithms can be found that have 
good run-time bounds for $k\ge 3$,  
such as those that have been devised for
$k=2$ (see e.g. \cite{cul-rud-89a}, \cite{ban-90}). More generally, 
it might also be of interest to use Theorem~\ref{main}
to help devise new methods to construct trees from
$k$-dissimilarities such as the one 
described in \cite{lev-yos-06a}.

Note that for \(k=2\) the bound $2k = 4$ 
given in the second sentence of Theorem~\ref{main} is 
sharp for treelike dissimilarities, 
but that it can be improved to \(2k-1 = 3\)
for  equidistant dissimilarities 
(see e.g. \citet[Theorem 7.2.5]{sem-ste-03a}).
Although this is not the case for $k\ge 3$, 
in Section~\ref{equisection} we shall prove 
that under certain circumstances it
may still be possible to recover a 
tree from a $k$-dissimilarity $D$ on $X$ in case 
it is equidistant on every \((2k-1)\)-element subset of $X$
(see Theorem~\ref{thm:topology}). 

We conclude the paper by considering 
3-dissimilarities in more detail. 
It is well-known (see e.g. 
\cite{gor-87} and \cite{sem-ste-03a}) that
treelike and equidistant 2-dissimilarities can be
characterized in terms the \emph{4-point}  and
\emph{ultrametric} condition, respectively
(for more details see Section~\ref{sec:k=3}).
Thus, for $k \ge 3$, we can ask for similar ``$m$-point'' 
conditions that characterize 
treelike/equidistant \(k\)"=dissimilarities.
This question has been 
studied in \cite{rub-09a} for the case $k=3$, where
a recursive characterization  
is provided, and
related problems are considered in \cite{boc-col-09a} 
in the context of tropical geometry.
In addition, a necessary (but not sufficient) $(k+2)$-point condition is given for
the general case in \cite[p. 618]{pac-spe-04a}.

In the last section, we provide explicit 
6-point characterizations for 3-dissimilarities
that are treelike/equidistant, which
can be regarded as generalizations of 
the 4"=point/ul\-tra\-metric conditions
(Theorem~\ref{thm:6-point-condition}).
We conclude with a short discussion as to 
why finding similar conditions for $k\ge 4$ appears to 
be somewhat more challenging.\\

\section{Preliminaries on phylogenetic trees}

For the remainder of this paper, $X$ will always
denote a non-empty, finite set.
Also, for a $k$-dissimilarity \(D:\binom{X}{k} \rightarrow \mathbb{R}\) 
and \(\{x_1,x_2,\dots,x_k\} \in \binom{X}{k}\),
we will write \(D(x_1,x_2,\dots,x_k)\)
instead of \(D(\{x_1,x_2,\dots,x_k\})\).

We now recall some further definitions concerning
phylogenetic trees (for more details see \cite{sem-ste-03a}).
Let \(T=(V,E)\) be a phylogenetic tree on \(X\).
A vertex \(v \in V \setminus X\) is called 
an \emph{interior} vertex of~\(T\). The
set of leaves of \(T\), that is, the set \(X\), is also
denoted by \(L(T)\). 
Recall that it is assumed that all interior vertices have degree at
least three. An edge \(e \in E\) is
called \emph{pendant} if it is incident to a leaf of~\(T\).
All other edges are called \emph{interior} edges.

Now, two phylogenetic trees \(T_1 = (V_1,E_1)\) and \(T_2=(V_2,E_2)\)
on the same set \(X\) are \emph{isomorphic} if there exists
a bijective map \(\iota:V_1 \rightarrow V_2\) such that
\(\iota(x) = x\) holds for all \(x \in X\) and 
\(\{u,v\} \in E_1\) if and only if 
\(\{\iota(u),\iota(v)\} \in E_2\) for any two distinct
\(u,v \in V_1\). In case we also have edge-weightings
\(\omega_i:E_i \rightarrow \mathbb{R}\),
\(i \in \{1,2\}\), the weighted phylogenetic trees
\((T_1,\omega_1)\) and \((T_2,\omega_2)\) are
isomorphic if, in addition, 
\(\omega_1(\{u,v\}) = \omega_2(\{\iota(u),\iota(v)\})\)
holds for every edge \(\{u,v\} \in E_1\).
Note that interior edges with weight~0 can give
rise to non-isomorphic weighted phylogenetic trees
that induce the same \(k\)-dissimilarity. Therefore,
in the following we will always implicitly assume
that in any weighted phylogenetic tree interior
edges are assigned positive weights. We call such edge-weightings
\emph{interior-positive}, for short.

We also apply the above terminology 
to (weighted) rooted phylogenetic trees
with the following minor adaptations.
For two rooted
phylogenetic trees \(T_1\) and \(T_2\) with
roots \(\rho_1\) and \(\rho_2\) to be \emph{isomorphic}
we require, in addition, that \(\iota(\rho_1) = \rho_2\) holds.
Note that in a weighted, rooted phylogenetic
tree \((T,\rho,\omega)\) with \(\omega\) equidistant,
every interior edge has a non-negative weight
while pendant edges might have negative weights
(cf. Figure~\ref{figure:rooted:example:intro}(c)).
Again, to avoid non-isomorphic weighted, rooted phylogenetic 
trees giving rise to the same \(k\)-dissimilarity,
we always assume that the edge-weightings are
interior-positive.  A rooted phylogenetic tree \(T=(V,E)\) on \(X\)
with root \(\rho\) is \emph{binary}
if every vertex in \(V \setminus (X \cup \{\rho\})\)
has degree precisely three and \(\rho\) has degree two. 

Next note that for every weighted, rooted 
phylogenetic tree \((T,\rho,\omega)\)
with, not necessarily non-negative, 
equidistant edge-weighting \(\omega\) there exists a 
constant \(M \geq 0\) such that the 
edge-weighting \(\omega_M\), that assigns weight \(\omega(e)\) to every interior edge \(e\) and
weight \(\omega(e) + M\) to every pendant edge \(e\),
is also equidistant and non-negative. Thus,
given a weighted, rooted phylogenetic tree \((T,\rho,\omega)\) on \(X\)
with \(\omega\) equidistant, we can construct, for
any sufficiently large constant \(M \geq 0\),
a weighted phylogenetic tree \((T(M),\omega(M))\) on \(X\)
with \(\omega(M)\) non-negative and interior-positive as follows:
If \(\rho\) has degree at least three, then put
\(T(M) = T\) and \(\omega(M) = \omega_M\).
Otherwise, delete \(\rho\) and
connect the two vertices \(u\) and \(v\) adjacent 
to \(\rho\) by a new edge with weight
\(\omega_M(\{\rho,u\}) + \omega_M(\{\rho,v\})\)
(cf. Figure~\ref{figure:example:unrooting}).
Note that if \(M\) is known, we can completely 
recover \((T,\rho,\omega)\) from \((T(M),\omega(M))\).

\begin{figure}
\centering
\includegraphics[scale=1.0]{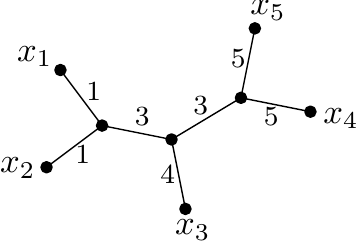}
\caption{The weighted phylogenetic tree \((T(M),\omega(M))\)
         arising from the weighted, rooted phylogenetic
         tree in Figure~\ref{figure:rooted:example:intro}(c) 
         for \(M=2\).}
\label{figure:example:unrooting}
\end{figure}

For every rooted phylogenetic tree
\(T=(V,E)\) on \(X\) with root \(\rho\)
there is a natural partial ordering $\leq_T$ on $V$ 
with unique minimal element $\rho$
defined by $v\leq_T w$ if and only if $v$ is a 
vertex of the unique path from $w$ to $\rho$ in $T$.
The rooted subtree \(T_v\) of \(T\) induced by \(v \in V\)
has vertex set \(\{u \in V : v \leq_T u\}\) and root \(v\).
In addition, for any equidistant edge-weighting 
\(\omega\) of \(T\), we define the \emph{height} \(h_{(T,\omega)}(v)\)
of \(v\), also referred to as the height of \(T_v\), 
as the value \(d_{(T,\omega)}(v,x)\) for any
leaf \(x\) of \(T_v\). Note that this height is well-defined
in view of the fact that \(\omega\) is equidistant. 

Finally, for rooted, as well as unrooted, phylogenetic trees
\(T\) on \(X\) we denote, for any subset \(Y \subseteq X\),
the smallest subtree of \(T\) containing the vertices in \(Y\)
by \(T|_Y\) and refer to it as the \emph{restriction}
of \(T\) to \(Y\). (To formally view $T|_Y$ as a phylogenetic tree
on $Y$, we suppress any vertices of degree~$2$.)
In case \(T\) is rooted, we also consider
\(T|_Y\) as a rooted phylogenetic tree where we
distinguish the minimal element in the vertex set of 
\(T|_Y\) with respect to the partial order \(\leq_T\)
as the root of the restriction. And, in case \(T\)
has an edge-weighting \(\omega\),
we consider \(T|_Y\) as a weighted tree 
with edge-weighting \(\omega|_Y\) 
obtained by restricting \(\omega\) to the edge 
set of \(T|_Y\).

\section{Determining trees} \label{sec:treelike}

We begin this section by stating a uniqueness theorem that
will be useful later:

\begin{theorem}\label{thm:k-diss-root-unique}
For every integer $k \geq 2$ and every set $X$ with at least $2k-1$
elements we have: 
\begin{itemize}
\item[(i)] Two weighted phylogenetic trees 
$(T_1,\omega_1)$ and $(T_2,\omega_2)$ on \(X\) 
with \(\omega_i\) non-negative and interior-positive, \(i \in \{1,2\}\),
are isomorphic if and only if \(D^k_{(T_1,\omega_1)} = D^k_{(T_2,\omega_2)}\) holds.
\item[(ii)] Two weighted, rooted phylogenetic trees 
$(T_1,\rho_1,\omega_1)$ and $(T_2,\rho_2,\omega_2)$ on \(X\) 
with \(\omega_i\) equidistant and interior-positive, \(i \in \{1,2\}\), are
isomorphic if and only if \(D^k_{(T_1,\omega_1)} = D^k_{(T_2,\omega_2)}\) holds.
\end{itemize}
\end{theorem}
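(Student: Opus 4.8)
The plan is to dispose of the forward direction immediately and reduce the non-trivial direction of (ii) to (i), which itself is essentially Theorem~\ref{thm:k-diss-unique}.

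First, the ``only if'' direction of both (i) and (ii) is immediate: if $\iota$ is an isomorphism of the weighted (rooted) phylogenetic trees, then for every $Y \in \binom{X}{k}$ it carries the smallest subtree spanning $Y$ in $T_1$ onto the smallest subtree spanning $Y$ in $T_2$ while preserving edge-weights, so the two subtrees have equal total weight and $D^k_{(T_1,\omega_1)}(Y) = D^k_{(T_2,\omega_2)}(Y)$. For the ``if'' direction of part~(i), I would simply invoke Theorem~\ref{thm:k-diss-unique}: since $|X| \ge 2k-1$, each $(T_i,\omega_i)$ is determined up to isomorphism by $D^k_{(T_i,\omega_i)}$, so equality of the two maps forces $(T_1,\omega_1)$ and $(T_2,\omega_2)$ to be isomorphic.

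The substance is the ``if'' direction of part~(ii), which I would reduce to part~(i) via the unrooting construction from the preliminaries. Fix a constant $M \ge 0$ large enough that both $\omega_1(M)$ and $\omega_2(M)$ are non-negative and interior-positive, yielding (unrooted) weighted phylogenetic trees $(T_i(M),\omega_i(M))$ on $X$. The key observation is that passing from a rooted tree to its unrooting shifts the $k$-dissimilarity by a \emph{fixed} constant: for every $Y \in \binom{X}{k}$,
\[
D^k_{(T_i(M),\omega_i(M))}(Y) = D^k_{(T_i,\omega_i)}(Y) + kM .
\]
Indeed, the smallest subtree spanning a $k$-set $Y$ contains exactly $k$ pendant edges, one incident to each leaf of $Y$, and since a pendant edge of $T_i$ (incident to a degree-one vertex) can occur in this subtree only as incident to such a leaf, the weighting $\omega_i(M)$ adds $M$ to exactly these $k$ contributions while leaving all interior weights unchanged. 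The merging of the two root edges when $\rho_i$ has degree~$2$ preserves total subtree weights and, as $|X| \ge 2k-1 \ge 3$ rules out the root having two leaf-neighbours, does not disturb this count. Consequently $D^k_{(T_1,\omega_1)} = D^k_{(T_2,\omega_2)}$ implies $D^k_{(T_1(M),\omega_1(M))} = D^k_{(T_2(M),\omega_2(M))}$, whence part~(i) yields an isomorphism $\iota$ of the two unrooted weighted trees.

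It remains to promote $\iota$ to an isomorphism of the rooted trees, and this equivariance step is where I expect the main care to be needed. Since $M$ is known, the preliminaries guarantee that $(T_i,\rho_i,\omega_i)$ is recoverable from $(T_i(M),\omega_i(M))$; concretely, the root $\rho_i$ is the unique point from which all leaves are equidistant in the $\omega_i(M)$-metric, and $\omega_i$ is obtained by subtracting $M$ from each pendant edge. Because $\iota$ is a leaf-labelled isometry between the two unrooted weighted trees, it preserves exactly the intrinsic data entering this recovery, so it carries $\rho_1$ to $\rho_2$ and matches $\omega_1$ with $\omega_2$; hence $\iota$, after reinstating a suppressed degree-$2$ root vertex if necessary, descends to an isomorphism of $(T_1,\rho_1,\omega_1)$ and $(T_2,\rho_2,\omega_2)$. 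The hard part is thus twofold: verifying the constant-shift identity uniformly (in particular in the degree-$2$ root case handled by the merging argument above), and checking that the root is genuinely an isomorphism-invariant of the unrooted equidistant tree, which is precisely where the equidistant hypothesis is indispensable.
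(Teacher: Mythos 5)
Your proposal is correct and follows essentially the same route as the paper: the paper also disposes of part~(i) as a restatement of Theorem~\ref{thm:k-diss-unique} and obtains part~(ii) by passing to the unrooted trees \((T_1(M),\omega_1(M))\) and \((T_2(M),\omega_2(M))\) for sufficiently large \(M\). The constant-shift identity \(D^k_{(T_i(M),\omega_i(M))}=D^k_{(T_i,\omega_i)}+kM\) and the metric recoverability of the root that you spell out are precisely the details the paper leaves implicit in its remark that \((T,\rho,\omega)\) can be recovered from \((T(M),\omega(M))\) once \(M\) is known.
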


Note that for $k=2$ parts (i) and (ii) of this theorem  
are well-known  (see e.g. \citet[Theorem~7.1.8]{sem-ste-03a}). 
Moreover, part (i) is just a restatement of
Theorem~\ref{thm:k-diss-unique} above due to Pachter and Speyer,
and part (ii) immediately follows from part~(i)
by considering the weighted phylogenetic trees 
\((T_1(M),\omega_1(M))\) and \((T_2(M),\omega_2(M))\)
for some sufficiently large constant \(M \geq 0\).

We now prove the first part of Theorem~\ref{main}:

\begin{theorem}\label{thm:2k-treelike}
Let $k\geq 2$ and $D$ be a \(k\)"=dissimilarity map 
on a set \(X\), $|X|\ge 2k$.  
\begin{enumerate}
\item[(i)] \label{thm:2k-treelike:tl}
\(D\) is treelike if and only if
the restriction of \(D\) to every \(2k\)-element
subset of \(X\) is treelike.
\item[(ii)] 
\label{thm:2k-treelike:eq} $D$ is equidistant if and only if
the restriction of \(D\) to every \(2k\)-element
subset of \(X\) is equidistant.
\end{enumerate}
\end{theorem}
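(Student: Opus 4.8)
The plan is to prove both parts by induction on $n=|X|$, handling the treelike and equidistant cases in parallel and relying throughout on the uniqueness statements of Theorem~\ref{thm:k-diss-root-unique}. The \emph{only if} direction is immediate in each case: if $D=D^k_{(T,\omega)}$ for a (rooted) weighted phylogenetic tree and $Y\subseteq X$, then $(T|_Y,\omega|_Y)$ realizes the restriction of $D$ to $\binom{Y}{k}$, so every restriction of a treelike (respectively, equidistant) map is again of the same type. Hence all the work lies in the \emph{if} direction. The base case is $n=2k$, where the hypothesis is literally the conclusion, since the only $2k$-element subset of $X$ is $X$ itself. So I would assume $n\ge 2k+1$ and that the statement holds for all ground sets of size between $2k$ and $n-1$.

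The first step is to produce a realizing tree on each $(n-1)$-element subset. For $x\in X$ set $X_x:=X\setminus\{x\}$. Every $2k$-element subset of $X_x$ is a $2k$-element subset of $X$, so its restriction of $D$ is treelike (respectively, equidistant) by hypothesis; since $|X_x|=n-1\ge 2k$, the induction hypothesis shows that the restriction of $D$ to $\binom{X_x}{k}$ is treelike (respectively, equidistant), realized by a (rooted) weighted phylogenetic tree $(T_x,\omega_x)$. As $n-1\ge 2k-1$, Theorem~\ref{thm:k-diss-root-unique} makes each $(T_x,\omega_x)$ unique up to isomorphism. Moreover, for distinct $x,y\in X$ the overlap $X\setminus\{x,y\}$ has size $n-2\ge 2k-1$, so $(T_x,\omega_x)|_{X\setminus\{x,y\}}$ and $(T_y,\omega_y)|_{X\setminus\{x,y\}}$ both realize the restriction of $D$ to $\binom{X\setminus\{x,y\}}{k}$ and are therefore isomorphic, again by Theorem~\ref{thm:k-diss-root-unique}. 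Thus the family $\{(T_x,\omega_x)\}_{x\in X}$ is pairwise compatible.

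The crux is then to glue this family into a single (rooted) weighted phylogenetic tree $(T,\omega)$ on $X$ with $(T,\omega)|_{X_x}\cong(T_x,\omega_x)$ for every $x\in X$. Granting this, the proof ends immediately: since $k<n$, every $Y\in\binom{X}{k}$ omits some $x\in X$, whence $Y\subseteq X_x$ and $D^k_{(T,\omega)}(Y)=D^k_{(T_x,\omega_x)}(Y)=D(Y)$; so $(T,\omega)$ realizes $D$ and $D$ is treelike (respectively, equidistant). Concretely I would fix one index $x_0$, start from $(T_{x_0},\omega_{x_0})$ on $X_{x_0}$, and insert the single missing leaf $x_0$: each $(T_y,\omega_y)$ with $y\ne x_0$ contains $x_0$ and records its position relative to $X\setminus\{x_0,y\}$, and these records must be stitched together to locate a unique edge of $T_{x_0}$ to subdivide and a unique pendant weight for $x_0$ (a unique height, in the equidistant case).

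The main obstacle is exactly this gluing step: showing that the insertion of $x_0$ is well defined and that the resulting tree restricts back to every $(T_y,\omega_y)$. The subtlety is that a single $(T_y,\omega_y)$ only sees $x_0$ relative to $X\setminus\{x_0,y\}$, so after restoring $y$ the attachment of $x_0$ to $T_{x_0}$ may be ambiguous (the relevant edge of $T_{x_0}|_{X_0\setminus\{y\}}$ can pull back to a short path in $T_{x_0}$). Resolving this requires combining the information from two distinct indices $y$, which is possible precisely because $|X\setminus\{x_0\}|=n-1\ge 2k\ge 4$ leaves remain, and then checking consistency via the pairwise compatibility established above. The equidistant case follows the same scheme for rooted trees, with edge weights replaced by the heights $h_{(T_y,\omega_y)}$, which coincide on overlaps and so assemble into a single equidistant weighting. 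Finally, the appearance of the bound $2k$ is explained by the induction, which needs $n-2\ge 2k-1$ for pairwise compatibility together with the base case $n=2k$; the complementary examples showing that $2k-1$ does not suffice when $k\ge 3$ are deferred to Section~\ref{sec:examples}.
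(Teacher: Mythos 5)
Your overall strategy (induction on $|X|$, realizing trees on each $(n-1)$-subset, pairwise compatibility via Theorem~\ref{thm:k-diss-root-unique}, then gluing) is a legitimately different route from the paper's, and everything up to and including the pairwise-compatibility step is sound. But the argument has a genuine gap exactly where you yourself locate ``the crux'': the gluing step is never actually proved. You assert that the leaf $x_0$ can be inserted into $(T_{x_0},\omega_{x_0})$ at a well-defined edge with a well-defined pendant weight by ``combining the information from two distinct indices $y$,'' but you give no argument that (a) the candidate attachment locations determined by different choices of $y$ coincide, (b) the pendant edge-weight (or height, in the rooted case) extracted from different $(T_y,\omega_y)$ is the same number, and (c) the resulting tree on $X$ restricts back to \emph{every} $(T_y,\omega_y)$, not just the ones used in the construction. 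Point (c) is not a formality: pairwise compatibility of the restrictions to the sets $X\setminus\{x,y\}$ does not by itself imply that a common refinement on $X$ exists, and establishing this is where all the work of the theorem lies. As written, the proof proves the easy reductions and postpones the theorem.

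For comparison, the paper avoids the amalgamation problem entirely. It uses Theorem~\ref{thm:k-diss-root-unique} only to show that the pairwise distance $d_{(T_Z,\omega_Z)}(a,b)$ is independent of the choice of $Z\in\binom{X}{2k-1}$ containing $a,b$, which yields a single $2$-dissimilarity $\delta$ on $X$ whose restriction to every $4$-subset is treelike; the known $k=2$ case then produces the global tree $(T,\omega)$ in one stroke, and uniqueness forces $(T,\omega)|_Z\cong(T_Z,\omega_Z)$, hence $D^k_{(T,\omega)}=D$. You could repair your proof along the same lines inside your inductive framework: instead of inserting $x_0$ combinatorially, define $\delta(a,b):=d_{(T_x,\omega_x)}(a,b)$ for any $x\notin\{a,b\}$ (well-defined by your compatibility step), observe that every quartet lies in some $X_x$, and invoke the $k=2$ theorem. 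Without some such replacement for the hand-waved insertion, the proposal is incomplete.
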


\begin{proof}
(i) For \(k=2\) this well-known (see e.g. \cite{sem-ste-03a}). So we
shall assume in the following
that \(k \geq 3\) holds. Clearly, if \(D\) 
is treelike, then also the
restriction to every \(2k\)-element subset 
of \(X\) is treelike.

Conversely, assume that the restriction of \(D\) to
every \(2k\)-element subset of \(X\) is treelike. 
Note that this implies that the restriction
of \(D\) to every \(i\)-element subset $Y$ of 
\(X\), \(k \leq i \leq 2k\),
is treelike, that is, there exists
a weighted phylogenetic tree \((T_Y,\omega_Y)\)
on \(Y\) with \(\omega_Y\) non-negative and
interior-positive such that \(D|_{Y} = D^k_{(T_Y,\omega_Y)}\) holds.

Now consider an arbitrary pair of 
elements \(\{a,b\} \in \binom{X}{2}\).
We claim that in any weighted phylogenetic tree \((T_Z,\omega_Z)\),
\(Z \in \binom{X}{2k-1}\), \(\{a,b\} \subseteq Z\), 
the induced distance \(d_{(T_Z,\omega_Z)}(a,b)\) is the same.
To show this, it suffices to consider such sets 
\(Z, Z'\) with \(Z' = (Z \setminus\{x\}) \cup \{y\}\) for two
distinct elements \(x,y \in X \setminus \{a,b\}\). We now consider
the weighted phylogenetic tree \((T_Y,\omega_Y)\) for the \(2k\)-element
set \(Y:=Z \cup \{y\}\). Since $\card Z=\card Z' = 2k-1$,  
it follows by Theorem~\ref{thm:k-diss-root-unique}(i) that 
$(T_Z,\omega_Z)$ is isomorphic to $(T_Y|_Z,\omega_Y|_Z)$ 
and $(T_{Z'},\omega_{Z'})$ is isomorphic to 
$(T_Y|_{Z'},\omega_Y|_{Z'})$. This implies that the induced
distance between \(a\) and \(b\) is the same for
\((T_Z,\omega_Z)\)  and \((T_{Z'},\omega_{Z'})\), as claimed.

As a consequence, for every pair $\{a,b\}\in\binom{X}{2}$, 
the restriction of \(D\) to any
\((2k-1)\)"=element subset of \(X\) containing $a$ and $b$ 
yields the same distance
between \(a\) and \(b\), which we denote by 
\(\delta(a,b)\). 
Note that the restriction of the so-defined 
2"=dissimilarity \(\delta\) on $X$
to every 4-element subset of \(X\) is treelike:
For any four distinct elements
\(a,b,c,d \in X\) we can select an arbitrary
\(Z \in \binom{X}{2k-1}\) with \(\{a,b,c,d\} \subseteq Z\) and
in the weighted phylogenetic tree \((T_Z,\omega_Z)\) the induced distances
between \(a,b,c,d\) will equal the corresponding values  
of \(\delta\).
Hence, (since the 
theorem holds for $k=2$) there exists a unique weighted 
phylogenetic tree \((T,\omega)\) on \(X\)
with \(\omega\) non-negative and interior-positive such that
\(D^2_{(T,\omega)} = \delta\) holds. Moreover, the restriction of
\((T,\omega)\) to any $2k$-element subset 
\(Z \subseteq X\) is isomorphic to \((T_Z,\omega_Z)\). 
Hence, the \(k\)"=dissimilarity \(D^k_{(T,\omega)}\) 
must be \(D\). \\

\noindent (ii) Again, if a $k$"=dissimilarity on $X$ is 
equidistant, so is 
its restriction to each $2k$"=subset of~$X$. So, let $D$ be a 
$k$"=dissimilarity 
on $X$ such that its restriction to each $Y\in \binom X{2k}$ 
is represented by a weighted, rooted phylogenetic tree $(T_Y,\rho_Y,\omega_Y)$
on \(Y\) with \(\omega_Y\) equidistant and interior-positive. 
Then, for some sufficiently large $M \geq 0$, all the 
weighted phylogenetic trees $(T_Y(M),\omega_Y(M))$ 
are such that \(\omega_Y(M)\) is non-negative and
interior-positive. Therefore,
by the first part of the theorem, 
there exists a unique weighted phylogenetic tree 
$(T,\omega)$ on \(X\) with \(\omega\) non-negative and 
interior-positive such that $D^k_{(T,\omega)}(A)=D(A)+kM$ holds
for all \(A \in \binom{X}{k}\).
Moreover, $(T,\omega)$ must be isomorphic
to \((T'(M),\omega'(M))\) for some
weighted, rooted phylogenetic tree \((T',\rho',\omega')\) on \(X\)
with \(\omega'\) equidistant and interior-positive,
since otherwise 
there would exist some $Y \in \binom{X}{2k}$ such that 
$\omega_Y$ is not equidistant in view of the
fact that, for all $Y \in \binom{X}{2k}$, 
$(T|_Y,\omega|_Y)$ is isomorphic to $(T_Y(M),\omega_Y(M))$. 
Hence $D$ must equal \(D^k_{(T',\omega')}\), as required.
\end{proof}

Using this theorem we now show that, for
fixed $k\ge 3$, it is possible to efficiently check 
when a $k$-dissimilarity is treelike/equidistant.
Note that any algorithm to check
whether a given $k$-dissimilarity $D$ is treelike/equidistant
needs to read $D$ first. Assuming that $D$
is given as the list of values it takes on for
each $k$-element subset of $X$, this yields a
lower bound of \(|X|^k\) on the run-time of any 
such algorithm.

\begin{cor}\label{polycheck}
For any fixed $k\geq 3$ and any 
\(k\)"=dissimilarity \(D\) on \(X\),
there is an algorithm with run-time in $O(f(k) \cdot |X|^{2k})$
to decide whether \(D\) is treelike/equidistant or not, 
where $f$ is a function that does not depend on $|X|$.
\end{cor}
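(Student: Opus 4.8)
The plan is to reduce the decision problem on all of $X$ to finitely many decision problems on $2k$"=element subsets via Theorem~\ref{thm:2k-treelike}, and then to solve each small problem by brute-force enumeration of tree topologies combined with a linear feasibility test. Concretely, Theorem~\ref{thm:2k-treelike} tells us that $D$ is treelike (respectively equidistant) if and only if $D|_Y$ is treelike (respectively equidistant) for every $Y \in \binom{X}{2k}$. Since $k$ is fixed, the number of such subsets is $\binom{|X|}{2k} = O(|X|^{2k})$, and they can be enumerated within this bound, each contributing only the $\binom{2k}{k}$ relevant values of $D$. It therefore remains to show that, for a single $2k$"=element set $Y$, deciding whether $D|_Y$ is treelike/equidistant can be carried out in a number of steps bounded by a function $f(k)$ independent of $|X|$ (indeed independent of everything but $k$, as $|Y| = 2k$ is determined by $k$).

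For the base case I would first observe that there are only finitely many isomorphism classes of (rooted) phylogenetic tree topologies on the $2k$"=element leaf set $Y$, and that this number $N(k)$ depends only on $k$. The key point is that for a \emph{fixed} topology $T$ the assignment $\omega \mapsto D^k_{(T,\omega)}$ is linear in the edge weights: for each $Y' \in \binom{Y}{k}$ we have $D^k_{(T,\omega)}(Y') = \sum_{e} \omega(e)$, where the sum ranges over the edges of the minimal subtree $T|_{Y'}$, and which edges occur is determined by $T$ alone. Hence, for fixed $T$, the requirement $D^k_{(T,\omega)} = D|_Y$ is a system of $\binom{2k}{k}$ linear equations in the $O(k)$ edge-weight variables. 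Adjoining the constraints that make $\omega$ admissible---non-negativity of all edges in the treelike case, and non-negativity of interior edges together with the (linear) equidistance and height-monotonicity conditions in the equidistant case---turns the question ``does this topology realise $D|_Y$?'' into a linear feasibility problem whose numbers of variables and constraints depend only on $k$.

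The base-case algorithm then runs through all $N(k)$ topologies and, for each, tests feasibility of the associated linear system; $D|_Y$ is treelike/equidistant precisely when at least one topology yields a feasible system. Since each feasibility test is a linear program of size bounded in terms of $k$, its cost is bounded by a function of $k$ alone, and summing over the $N(k)$ topologies gives a base-case cost of at most $f(k)$ for some $f$ independent of $|X|$. Multiplying by the $O(|X|^{2k})$ subsets yields the claimed total run-time $O(f(k)\cdot |X|^{2k})$.

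I expect the only genuine obstacle to lie in justifying the base case, and within it two small points deserve care. First, one must check that ``treelike''---which by definition only asks for \emph{some} non-negative weighting---is faithfully captured by enumerating topologies and allowing interior edges of weight zero; this is fine because contracting a zero-weight interior edge produces a coarser admissible topology, so no treelike restriction is missed. Second, in the equidistant case the strict interior-positivity, and the possibility of negative pendant weights, must be encoded honestly as linear constraints; the strict inequalities can be handled in the usual way (for example by introducing a slack variable bounded below each interior edge weight and checking that its maximum is positive). Neither point affects the bound, since all the systems involved have size depending only on $k$.
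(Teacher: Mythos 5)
Your proposal is correct and follows essentially the same route as the paper's proof: reduce to the $O(|X|^{2k})$ subsets of size $2k$ via Theorem~\ref{thm:2k-treelike}, enumerate the finitely many tree topologies on each such subset, and test each topology by solving a linear feasibility problem whose size depends only on $k$. Your additional remarks on zero-weight interior edges and on encoding strict interior-positivity are sensible refinements of details the paper leaves implicit.
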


\begin{proof}
Given a \(k\)"=dissimilarity \(D\) on \(X\),
it suffices to check for every \(Z \in \binom{X}{2k}\) whether
\(D|_{Z}\) is treelike/equidistant. To do this, one
can enumerate all (isomorphism classes of) 
unweighted phylogenetic trees (rooted or unrooted)
with \(2k\) leaves labeled
by the elements in~\(Z\). Note that the number of these trees
depends on \(k\) but not on \(|X|\).
For each of those trees \(T\), it remains to check
if there exists an edge-weighting \(\omega\) with certain
properties so that \(D^k_{(T,\omega)} = D|_{Z}\) holds.
The latter can be phrased as a test whether a system
of linear equations and inequalities has a solution,
a problem for which a polynomial time algorithm is known
(see e.g. \cite{sch-98a}). Therefore, one can check
in \(O(f(k))\) time whether \(D|_{Z}\) is treelike/equidistant
where \(f\) is a function that does not depend on $|X|$.
Since the number of $2k$-element 
subsets of $X$ is in $\mathcal{O}(|X|^{2k})$, 
this establishes the required run-time bound.
\end{proof}

\section{Sharpness of the bounds}\label{sec:examples}

In this section, we shall prove the second part of Theorem~\ref{main},
that is, we shall prove that the bounds
presented in the theorem are indeed sharp.
More specifically, for each \(k \geq 3\), we
will present an example of a $k$"=dissimilarity $D$ 
whose restriction to every $(2k-1)$-element subset is
treelike/equidistant while $D$ is not treelike/equidistant. 
These examples will be presented in Examples
\ref{exmp:equidistant} and \ref{exmp:treelike} below.

We begin by presenting a useful lemma.
Assume we have $k \geq 3$, $\card X \geq k$  and that
$(T=(V,E),\rho,\omega)$ is a weighted, rooted phylogenetic 
tree on $X$ with \(\omega\) equidistant and interior-positive.
In addition, assume that $\rho$ is adjacent to precisely two vertices
\(u\) and \(v\) (cf. Figure~\ref{figure:counter:example}(a)). 
Put $a=\omega(\{\rho,u\})$ and $b=\omega(\{\rho,v\})$. 
Now define, for each $\alpha \in \RR$ with 
$\alpha < 2\min\{a,b\}$, a new equidistant
edge-weighting \(\omega_{(k,\alpha)}\) for \(T\) 
(cf. Figure~\ref{figure:counter:example}(b))
by putting, for all $e\in E$,
\[
\omega_{(k,\alpha)}(e)=\begin{cases} \omega(e)+\alpha/k,&\text{if $e$ 
is incident to a leaf},\\
\omega(e)-\alpha/2,&\text{if $e$ is incident to $\rho$},\\ 
\omega(e),&\text{else}\,.\end{cases}
\]

\begin{figure}
\centering
\includegraphics[scale=1.0]{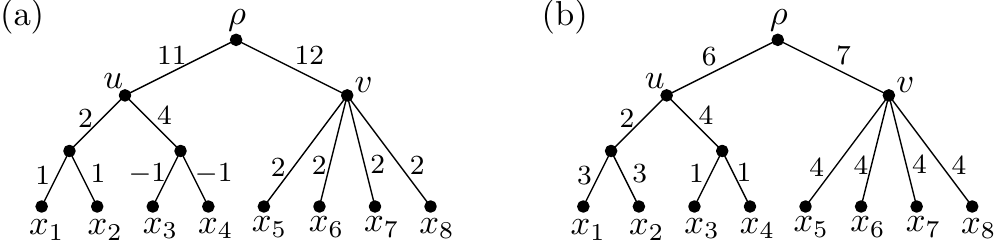}
\caption{(a) A weighted, rooted phylogenetic tree $(T,\rho,\omega)$
             with an equidistant and interior-positive edge-weighting
             \(\omega\).
         (b) The same rooted phylogenetic tree but with the
             equidistant edge-weighting \(\omega_{(k,\alpha)}\)
             for \(k=5\) and \(\alpha=10\). Note that 
             \(D^5_{(T,\omega)} = D^5_{(T,\omega_{(5,10)})}\).}
\label{figure:counter:example}
\end{figure}

\begin{lemma}\label{lem:Tkalpha}
Suppose $k\geq 3$, $\card X \geq k$  and that
$(T=(V,E),\rho,\omega)$ is a weighted, rooted phylogenetic tree 
on $X$ with \(\omega\) equidistant and interior-positive
such that $\rho$ is adjacent with precisely two vertices
\(u\) and \(v\) and put $a=\omega(\{\rho,u\})$ and 
$b=\omega(\{\rho,v\})$.  
Then for each $\alpha\in \RR$ with $\alpha < 2\min\{a,b\}$ 
and $A\in\binom Xk$ we have 
\[ 
D^k_{(T,\omega_{(k,\alpha)})}(A)=
\begin{cases} 
D^k_{(T,\omega)}(A)+\alpha,& \text{if $A \subseteq L(T_u)$ or $A \subseteq L(T_v)$},\\
D^k_{(T,\omega)}(A),&\text{else}\,.
\end{cases}
\]
In particular, $(T,\rho,\omega)$ and 
$(T,\rho,\omega_{(k,\alpha)})$ induce 
the same $k$"=dissimilarity if and only if we have $\alpha=0$ 
or $|L(T_u)| \leq k-1$ and $|L(T_v)| \leq k-1$ hold.
\end{lemma}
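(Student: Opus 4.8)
The plan is to exploit the fact that passing from $\omega$ to $\omega_{(k,\alpha)}$ changes only the edge-\emph{weights} of $T$ and never its topology. Consequently, for any $A\in\binom{X}{k}$ the smallest subtree $T|_A$ spanning $A$ has exactly the same edge set under both weightings, so that
\[
D^k_{(T,\omega_{(k,\alpha)})}(A)-D^k_{(T,\omega)}(A)
=\sum_{e\in E(T|_A)}\bigl(\omega_{(k,\alpha)}(e)-\omega(e)\bigr).
\]
By the definition of $\omega_{(k,\alpha)}$ the summand vanishes on every edge that is neither pendant nor incident to $\rho$, so only two families of edges can contribute, and the whole argument reduces to counting how many edges of each family lie in $T|_A$.

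First I would count which of these edges lie in $T|_A$. Since $\card A=k\ge 2$, for each leaf $x\in A$ the pendant edge at $x$ separates $x$ from the remaining elements of $A$ and hence lies in $T|_A$, while the pendant edge at any leaf outside $A$ does not; thus $T|_A$ contains exactly the $k$ pendant edges at the leaves of $A$, contributing $k\cdot(\alpha/k)=\alpha$. For the two $\rho$-incident edges I would use that $\rho$ has degree two, so $L(T)=L(T_u)\sqcup L(T_v)$; deleting $\{\rho,u\}$ separates $L(T_u)$ from $L(T_v)$, whence this edge lies in $T|_A$ if and only if $A$ meets both parts, and the same criterion holds for $\{\rho,v\}$. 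Therefore the two $\rho$-incident edges always enter $T|_A$ together, precisely when $A\not\subseteq L(T_u)$ and $A\not\subseteq L(T_v)$; their contribution is $0$ when $A\subseteq L(T_u)$ or $A\subseteq L(T_v)$, and $2\cdot(-\alpha/2)=-\alpha$ otherwise. Summing the two contributions gives $\alpha+0=\alpha$ in the first case and $\alpha-\alpha=0$ in the second, which is exactly the claimed formula.

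For the ``in particular'' statement, note that the two weightings induce the same $k$"=dissimilarity if and only if the difference above vanishes for every $A\in\binom{X}{k}$. By the formula this difference equals $\alpha$ for the sets $A$ contained in $L(T_u)$ or in $L(T_v)$ and equals $0$ for all other $A$. Hence it vanishes for all $A$ exactly when $\alpha=0$, or when there is no $A\in\binom{X}{k}$ lying entirely inside $L(T_u)$ or entirely inside $L(T_v)$; the latter condition is precisely $\card{L(T_u)}\le k-1$ and $\card{L(T_v)}\le k-1$.

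I expect the only real work to be the edge-counting, and in particular the observation that the two $\rho$-incident edges always enter $T|_A$ as a pair is what makes the $+\alpha$ coming from the pendant edges cancel exactly; getting this case split right is the crux. Two minor points I would verify in passing are that $\omega_{(k,\alpha)}$ is genuinely equidistant (each leaf-to-$\rho$ path carries exactly one pendant and one $\rho$-incident edge, so all root distances shift uniformly by $\alpha/k-\alpha/2$) and that the constraint $\alpha<2\min\{a,b\}$ keeps the new weights $a-\alpha/2$ and $b-\alpha/2$ positive, preserving interior-positivity. The degenerate situation in which $u$ or $v$ is itself a leaf is harmless: that side then has a single leaf and so contributes no $A\in\binom{X}{k}$, so the counting is unaffected.
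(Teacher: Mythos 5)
Your proposal is correct and follows essentially the same argument as the paper: count the $k$ pendant edges of $T|_A$ (contributing $k\cdot\alpha/k=\alpha$) and observe that the two root-incident edges lie in $T|_A$ exactly when $A$ meets both $L(T_u)$ and $L(T_v)$ (contributing $-\alpha$ in that case), with the ``in particular'' part following by the same counting of which $A$ avoid one side. The additional sanity checks you mention (equidistance of $\omega_{(k,\alpha)}$, positivity under $\alpha<2\min\{a,b\}$) are not in the paper's proof but are harmless.
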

\begin{proof}
Let $A\subseteq \binom Xk$. The restriction $T|_A$  
contains $k$ pendant edges. 
Moreover, $T|_A$ contains the two edges incident in \(T\)
with \(\rho\) if and only if we have $A \cap L(T_u) \neq \emptyset$ 
and \(A \cap L(T_v) \neq \emptyset\). 
Hence we have
\(D^k_{(T,\omega_{(k,\alpha)})}(A) = D^k_{(T,\omega)}(A) + k\frac{\alpha}{k}\)
in case $A \subseteq L(T_u)$ or $A \subseteq L(T_v)$ holds and 
we have \(D^k_{(T,\omega_{(k,\alpha)})}(A) = D^k_{(T,\omega)}(A) + k\frac{\alpha}{k}
- 2 \frac{\alpha}{2} = D^k_{(T,\omega)}(A)\) otherwise,
as claimed. The second assertion trivially holds if 
$\alpha=0$. If $\alpha\not=0$, then it holds since all 
$A\in\binom Xk$ contain leaves from both $L(T_u)$ and \(L(T_v)\) 
if and only if $|L(T_u)| \leq k-1$ and $|L(T_v)| \leq k-1$ hold.
\end{proof}

\begin{exmp}[Equidistant]\label{exmp:equidistant}
Let $k\geq3$ and $(T,\rho,\omega)$ be a weighted,
rooted phylogenetic tree on \(X\) with \(\omega\)
equidistant and interior-positive such that 
the root $\rho$ of $T$ is adjacent 
with precisely two vertices \(u\) and \(v\).
Put $a=\omega(\{\rho,u\})$ and 
$b=\omega(\{\rho,v\})$.
Assume that 
$\card{L(T_u)},\card{L(T_v)}\geq k$. 
Now, choose some non"=zero $\alpha < 2\min\{a,b\}$ 
and define a $k$"=dissimilarity $D$ on $X$ via
\[
D(A):=
\begin{cases}
D^k_{(T,\omega_{(k,\alpha)})}(A),&\text{if } A\subseteq L(T_u),\\
D^k_{(T,\omega)}(A),&\text{else,}
\end{cases}\,
\]
for all $A\in \binom Xk$. 
We first show that the restriction of $D$ to 
any $(2k-1)$-element subset of $X$ is equidistant: For any 
$Y\in\binom X {2k-1}$ we define a weighted,
rooted phylogenetic tree $(T_Y,\rho_Y,\omega_Y)$ on $Y$ 
with \(\omega_Y\) equidistant and interior-positive
by setting
\[
(T_Y,\rho_Y,\omega_Y):=
\begin{cases}
(T,\rho,\omega_{(k,\alpha)})|_Y ,&\text{if }\card{Y \cap L(T_u)}\geq k,\\
(T,\rho,\omega)|_Y,&\text{else}.
\end{cases}
\]
By the definition of $D$ it follows that
$D|_Y=D^k_{(T_Y,\omega_Y)}$ holds.

We now show that $D$ is not equidistant. 
It suffices to show that there exists some subset 
$Z$ of $X$ such that $D|_Z$ is not equidistant. So, 
let $Z\subseteq X$ be such that 
$\card{Z\cap L(T_u)}=\card{Z\cap L(T_v)}=k$ (such a 
subset exists since $\card{L(T_u)},\card{L(T_v)}\geq k$), 
and suppose there exists some weighted, rooted
phylogenetic tree \((T_Z,\rho_Z,\omega_Z)\) on \(Z\)
with \(\omega_Z\) equidistant and interior-positive 
such that $D|_Z=D^k_{(T_Z,\omega_Z)}$ holds. For every $a\in Z\cap L(T_u)$ 
we define the weighted, rooted phylogenetic tree
\((T_a,\rho_a,\omega_a):=(T_Z,\rho_Z,\omega_Z)|_{Z\setminus\{a\}}\)
on $Z\setminus\{a\}$. 
Choose distinct $x,y\in Z\cap L(T_u)$
which are not adjacent to a common vertex
of degree 3 (this is possible since $\card{Z \cap L(T_u)} \geq k>2$). 
By Theorem~\ref{thm:k-diss-root-unique}(ii), 
$(T_x,\rho_x,\omega_x)$ is 
isomorphic to 
$(T_{Z\setminus\{x\}},\rho_{Z\setminus\{x\}},\omega_{Z\setminus\{x\}})$ and 
$(T,\rho,\omega)|_{Z\setminus\{x\}}$, and $(T_y,\rho_y,\omega_y)$ 
is isomorphic to 
$(T_{Z\setminus\{y\}},\rho_{Z\setminus\{y\}},\omega_{Z\setminus\{y\}})$ 
and $(T,\rho,\omega)|_{Z\setminus\{y\}}$. 

By our choice of $x$ and $y$, up 
to isomorphism,  there exists only one possible 
weighted, rooted phylogenetic tree on \(Z\) 
whose restriction to $Z\setminus\{x\}$ and 
$Z\setminus\{y\}$ is $T_x$ and $T_y$, 
respectively. Hence $(T_Z,\rho_Z,\omega_Z)$ 
is isomorphic to $(T,\rho,\omega)|_Z$. 
However, this contradicts the fact that $(T_Z,\rho_Z,\omega_Z)$ 
induces $D|_Z$ since 
$D(Z\cap L(T_u))=D^k_{(T,\omega_{(k,\alpha)})}(Z \cap L(T_u))
\not=D^k_{(T,\omega)}(Z\cap L(T_u))=D^k_{(T_Z,\omega_Z)}(Z\cap L(T_u))$, 
where the first equality is by 
definition and the second follows from the 
fact that $(T_Z,\rho_Z,\omega_Z)$ 
and $(T,\rho,\omega)|_Z$ are isomorphic.
\end{exmp}

\begin{exmp}[Treelike]\label{exmp:treelike}
An example for the case $k=3$ was given by \cite{che-fic-07}. Here we
give an example for general $k$.
Based on the weighted, rooted phylogenetic tree \((T,\rho,\omega)\)
in Example~\ref{exmp:equidistant}, consider $(T(M),\omega(M))$ 
for some sufficiently large constant \(M \geq 0\) and choose \(\alpha > 0\).
Then, using 
the same arguments as in Example~\ref{exmp:equidistant}, 
it is straight-forward to show
that the $k$"=dissimilarity $D$ constructed in the same
way as in Example~\ref{exmp:equidistant}
is not treelike while its restriction to every 
$(2k-1)$-element subset of $X$ is treelike.
\end{exmp}

\section{The case $2k-1$ for equidistant $k$-dissimilarities}
\label{equisection}

It is well-known that if the restriction of a 2-dissimilarity $D$ on $X$
to every subset of $X$ of size $3$ is equidistant, then $D$ is equidistant 
\cite[Theorem 7.2.5]{sem-ste-03a}.
In contrast, in Example~\ref{exmp:equidistant}, we have seen
that for $k\ge 3$ a $k$"=dissimilarity $D$ is not necessarily 
equidistant if its restriction to every 
$(2k-1)$-element subset is equidistant. However,
we shall now prove that we can still recover a tree if 
the restriction of such a $D$ to all $(2k-1)$-element 
subsets \(Y \subseteq X\) 
is induced by a weighted, rooted phylogenetic tree
\((T,\rho,\omega)\) on \(Y\) with \(\omega\) equidistant and 
interior-positive such that \((T,\rho,\omega)\) is \emph{generic},
that is, \(T\) is binary and no two distinct interior vertices 
have the same height.

\begin{theorem}\label{thm:topology}
Let $k\geq 3$ and $D$ be a $k$"=dissimilarity map on 
$X$ such that, for all $Y\in\binom X{2k-1}$, there
exists a generic weighted, rooted phylogenetic tree
\((T_Y,\rho_Y,\omega_Y)\) on \(Y\) with 
\(\omega_Y\) equidistant and interior-positive
such that \(D|_Y = D_{(T_Y,\omega_Y)}\) holds.
Then there exists a binary rooted phylogenetic
tree $T$ on $X$ such that, for all $Y\in\binom X{2k-1}$,
the unweighted, rooted phylogenetic trees $T_Y$ and $T|_Y$ are isomorphic.
\end{theorem}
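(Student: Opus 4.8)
The plan is to forget the edge weights entirely and recover only the combinatorial type of $T$, by reconstructing for every three elements $a,b,c\in X$ the rooted triple they induce, i.e.\ which of $ab|c$, $ac|b$, $bc|a$ holds. Recall that a rooted binary phylogenetic tree is determined up to isomorphism by its set of rooted triples (see e.g.\ \cite{sem-ste-03a}). Since each $(T_Y,\rho_Y,\omega_Y)$ is binary, its restriction $T_Y|_{\{a,b,c\}}$ is a resolved rooted triple, and restricting a tree to a larger leaf-set preserves the rooted triple on $\{a,b,c\}$; thus every $Y\in\binom X{2k-1}$ with $\{a,b,c\}\subseteq Y$ proposes a rooted triple for $\{a,b,c\}$, read off from $T_Y$. (Because $\omega_Y$ is equidistant, this is the pair whose most recent common ancestor has smallest height, equivalently the strictly smallest pairwise distance in $T_Y$.) I would define $r(a,b,c)$ to be this triple. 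In contrast to the proof of Theorem~\ref{thm:2k-treelike}, I cannot hope to recover consistent \emph{pairwise distances}: Example~\ref{exmp:equidistant} shows that the induced distance between two leaves genuinely depends on the chosen subset. The point is that the induced rooted triple does not.

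The crux is therefore to prove that $r(a,b,c)$ is well defined, i.e.\ independent of $Y$. As in Theorem~\ref{thm:2k-treelike} I would reduce this to a single swap: the family of $Y\in\binom X{2k-1}$ with $\{a,b,c\}\subseteq Y$ is connected under exchanging one element outside $\{a,b,c\}$ (a Johnson-graph argument), so it suffices to compare $Y$ with $Y'=(Y\setminus\{x\})\cup\{z\}$ for $x,z\notin\{a,b,c\}$. Writing $W=Y\cap Y'$, with $|W|=2k-2$ and $\{a,b,c\}\subseteq W$, both $T_Y|_W$ and $T_{Y'}|_W$ are generic equidistant trees inducing the same $k$-dissimilarity $D|_W$, and the triple read from $T_Y$ (resp.\ $T_{Y'}$) equals the one read from $T_Y|_W$ (resp.\ $T_{Y'}|_W$). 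Hence everything comes down to the following statement, which I expect to be the main obstacle: \emph{if two generic weighted, rooted, equidistant phylogenetic trees on a $(2k-2)$-element set induce the same $k$-dissimilarity, then they induce the same rooted triples.} Here genericity is essential: at $2k-2$ leaves the weighted tree is no longer determined by its $k$-dissimilarity (cf.\ Theorem~\ref{thm:k-diss-unique} and Example~\ref{exmp:equidistant}), so I am not claiming the trees are isomorphic, only that the residual ambiguity is confined to edge weights and leaves every rooted triple intact.

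To attack this obstacle I would try to pin down each rooted triple by a criterion phrased purely in the numbers $\{D(A):A\in\binom Wk\}$, since such a criterion automatically yields the same answer whether the data is realised by $T_Y|_W$ or by $T_{Y'}|_W$. On the $2k-2$ leaves of $W$ there are exactly $2k-5$ ``spectator'' leaves besides $a,b,c$, and I would build probe sets of size $k$ that isolate the relative order of the heights of the three pairwise most recent common ancestors by comparing the total lengths $D(A)$ as $c$ is swapped for $b$ inside a common $k$-set; genericity guarantees that the decisive comparison is a strict inequality rather than an accidental tie, so the comparison is robust. (For $k=3$, $|W|=4$, one checks directly that the pattern of equalities among the four values $D(abc),D(abd),D(acd),D(bcd)$ already distinguishes the balanced shape from every caterpillar and fixes the triples, which is the phenomenon to be generalised.) An alternative route would be to classify the moves relating two generic equidistant trees on $2k-2$ leaves with equal $k$-dissimilarity and to observe, as in Lemma~\ref{lem:Tkalpha}, that each such move only rescales pendant and top edges and hence preserves all ancestry relations.

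Granting consistency, the assembly is routine. The map $r$ then assigns exactly one rooted triple to every $3$-subset of $X$, and its restriction to any $4$-subset $\{a,b,c,d\}$ coincides with the rooted triples of the genuine tree $T_Y$ for any $Y\in\binom X{2k-1}$ containing $\{a,b,c,d\}$ (such $Y$ exists since $2k-1\ge5$); hence every $4$-subset restriction of $r$ is displayed by a tree. Since the inference rules for rooted triples involve at most four leaves, this forces $r$ to be globally consistent, so (see e.g.\ \cite{sem-ste-03a}) there is a unique binary rooted phylogenetic tree $T$ on $X$ whose set of rooted triples is exactly $\{r(a,b,c):\{a,b,c\}\in\binom X3\}$. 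Finally, for each $Y\in\binom X{2k-1}$ the trees $T|_Y$ and $T_Y$ are both binary and display the same rooted triples, namely those $r(a,b,c)$ with $\{a,b,c\}\subseteq Y$; as a binary rooted tree is determined by its rooted triples, $T|_Y$ and $T_Y$ are isomorphic, as required.
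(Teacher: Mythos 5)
Your overall architecture coincides with the paper's: assign to each $3$-subset the rooted triple it receives inside any ambient $(2k-1)$-set, prove independence of the ambient set by a single-swap reduction to a common $(2k-2)$-element intersection, and then assemble the resulting triplet system into a binary rooted tree using the fact that the consistency conditions for triplet systems involve only three and four leaves (Theorem~\ref{thm:triplets}). That assembly step is correct and, if anything, slightly cleaner than the paper's verification of ($\cR$1) and ($\cR$2). The gap is exactly where you flag it: the claim that two generic equidistant trees on a $(2k-2)$-element set inducing the same $k$-dissimilarity induce the same rooted triples is (via Theorem~\ref{thm:triplets}) equivalent to the paper's Lemma~\ref{prop:2k-2-unique}, it carries most of the weight of the argument, and neither of your two proposed routes is actually carried out.

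Moreover, the first route as sketched would not go through uniformly. The paper's proof of Lemma~\ref{prop:2k-2-unique} splits into two cases. If neither tree has a vertex whose induced subtree carries exactly $k-1$ leaves, then removing any edge leaves at least $k$ leaves on one side; this is precisely the property the Pachter--Speyer uniqueness proof requires, so that proof applies even though $|X|=2k-2<2k-1$ and yields isomorphism of the \emph{weighted} trees outright --- no probe sets are involved, and it is not clear your comparison scheme covers this case. If one tree does have such a vertex, with leaf set $B$, $|B|=k-1$ and $A=X\setminus B$, then before any probes can be interpreted one must first show the other tree realizes the same split (the paper compares the values $D(A\cup\{b\})$ over $b\in B$ and invokes genericity); only then can triples \emph{within} $A$ be read off from sets of the form $C\cup\{a,b\}$ with $C\subseteq B$, $|C|=k-2$ --- the spectators must all lie on the opposite side of the split for differences of $D$-values to reflect the relevant heights, and a case analysis over the possible shapes of $T_1|_{C\cup\{a,b,c\}}$ is still required. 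Finally, one must separately argue that the two subtrees over $A$ and $B$ are attached at the same position (the paper does this by locating the set of $a\in A$ minimizing $D(B\cup\{a\})$). Your $k=3$ sanity check is consistent with all of this, but the general statement is the theorem's real content and remains unproven in your write-up.
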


To prove this theorem, we shall use a well-known result 
about collections of rooted phylogenetic trees 
each having three leaves,
that will allow us to ``merge'' trees, which 
we now recall. A \emph{triplet} on $X$ is a 
pair $(\{a,b\},c)$ 
with $a,b,c\in X$ distinct, which we denote also by $ab|c$.
The set of all triplets on $X$ is denoted by $\cR(X)$, 
and a subset $\cR$ of $\cR(X)$ is called a \emph{triplet system} 
on $X$. Given a rooted phylogenetic tree $T$ on $X$, 
the triplet system $\cR_T$ of $T$ is the set of all triplets $ab|c$ 
on $X$ such that the path from $a$ to $b$ in $T$ is 
vertex-disjoint from the path from $c$ to the root $\rho$ 
in $T$. It is easily seen that for a rooted phylogenetic tree $T$ on $X$ 
and a rooted phylogenetic tree $T'$ on $Y\subseteq X$, we have 
$\cR_{T'}\subseteq \cR_T$ if  $T'$ 
is isomorphic to $T|_Y$. We now state the 
aforementioned result:

\begin{theorem}[Theorem~9.2~(ii) in 
\cite*{dre-hub-11a}]\label{thm:triplets}
A rooted phylogenetic tree \(T\) on \(X\) is, up to isomorphism,
uniquely determined by the triplet system $\cR_T$.
Moreover, given a triplet system $\cR\subseteq\cR(X)$ 
there exists a rooted phylogenetic tree on $X$ with $\cR_T=\cR$ 
if and only if $\cR$ satisfies the following two conditions:
\begin{enumerate}[($\cR$1)]
\item For any three elements $a,b,c\in X$ at 
most one of the triplets $ab|c$, $bc|a$ 
and $ca|b$ is contained in $\cR$.
\item For any four elements $a,b,c,d\in X$, $ab|c\in \cR$ 
implies $ad|c\in\cR$ or $ab|d\in\cR$.
\end{enumerate}
\end{theorem}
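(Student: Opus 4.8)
The plan is to establish the three assertions separately: \emph{(necessity)} that $\cR_T$ satisfies (\(\cR\)1) and (\(\cR\)2) for every rooted phylogenetic tree $T$; \emph{(sufficiency)} that every triplet system obeying (\(\cR\)1) and (\(\cR\)2) equals $\cR_T$ for some $T$; and \emph{(uniqueness)} that $\cR_T$ determines $T$ up to isomorphism. Throughout I would work with the most-recent-common-ancestor description of $\cR_T$: writing $\mathrm{mrca}(S)$ for the $\leq_T$-maximal common ancestor of a leaf-set $S$, one checks straight from the definition that $ab|c\in\cR_T$ if and only if $\mathrm{mrca}(\{a,b\}) >_T \mathrm{mrca}(\{a,b,c\})$, that is, $a$ and $b$ meet strictly below the point at which $c$ joins them. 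A first observation, already hinted at in the excerpt, is that restriction is compatible with forming triplet systems, namely $\cR_{T|_Y}=\cR_T\cap\cR(Y)$ for every $Y\subseteq X$, since suppressing degree-$2$ vertices preserves the relative $\leq_T$-order of the three mrcas involved.

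For necessity I would exploit that both conditions are \emph{local}: (\(\cR\)1) constrains only triplets on a fixed $3$-set $\{a,b,c\}$, and (\(\cR\)2) only triplets on a fixed $4$-set $\{a,b,c,d\}$. By the restriction compatibility above it therefore suffices to verify them on the finitely many shapes of rooted phylogenetic trees with at most four leaves. Condition (\(\cR\)1) is then immediate from the mrca-criterion: among $a,b,c$ either all three mrcas $\mathrm{mrca}(\{x,y\})$ coincide (a fan, so no triplet), or exactly one pair has a strictly deeper mrca (so exactly one triplet), and never two. For (\(\cR\)2) I would check the implication on each $4$-leaf shape; the only interesting case is when $ab|c$ holds, and a short mrca-computation shows that any fourth leaf $d$ either attaches strictly above $\mathrm{mrca}(\{a,b\})$, giving $ab|d$, or lies in the $a,b$-clade, in which case it is separated from $c$ together with $a$, giving $ad|c$.

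For uniqueness I would use the Aho-type graph. Given $\cR$, form the graph $G(\cR)$ on vertex set $X$ with an edge $\{a,b\}$ whenever $ab|c\in\cR$ for some $c$, and let $X_1,\dots,X_m$ be its connected components. For \emph{any} tree $T$ realizing $\cR$ these components are forced to be the leaf-sets of the subtrees rooted at the children of $\rho$: each $ab|c\in\cR_T$ forces $a,b$ into a common child-subtree (otherwise $\mathrm{mrca}(\{a,b\})=\rho=\mathrm{mrca}(\{a,b,c\})$), so each component lies in one child-subtree; conversely, since $\rho$ has at least two children, any two leaves $a,b$ in the same child-subtree satisfy $ab|c\in\cR_T$ for $c$ in another child-subtree, so each child-subtree lies in one component. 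Hence the child partition of every realizing tree equals $\{X_1,\dots,X_m\}$, a quantity depending only on $\cR$; recursing on $T|_{X_i}$ with the inherited system $\cR\cap\cR(X_i)$ and inducting on $|X|$ yields uniqueness.

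For sufficiency I would build $T$ by the same recursion: create a root $\rho$ whose child-subtrees arise by recursively applying the construction to each $(X_i,\cR\cap\cR(X_i))$, the induced systems again satisfying (\(\cR\)1) and (\(\cR\)2). One then verifies $\cR_T=\cR$ by comparing, for every triple, the mrca-criterion in the built tree with membership in $\cR$; triples inside a single $X_i$ are handled by induction, whereas a triple with $a,b\in X_i$ and $c\notin X_i$ requires $ab|c\in\cR$. \textbf{The main obstacle is the engine that makes this recursion well-founded and correct: the claim that whenever $|X|\geq 2$ the graph $G(\cR)$ is disconnected (so $m\geq 2$), together with the saturation statement that any two vertices in the same component form a triplet $ab|c$ with every outside $c$.} This is precisely where (\(\cR\)2) is essential — (\(\cR\)1) alone does not suffice — and I would argue by contradiction: assuming $G(\cR)$ connected, take a triplet $ab|c$ and propagate it along a path of $G(\cR)$ by repeated use of (\(\cR\)2), forcing two distinct triplets on a common $3$-set and contradicting (\(\cR\)1). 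Once this lemma is in place, the recursion terminates, the induced systems remain within the axioms, and the mrca-comparison closes the sufficiency argument.
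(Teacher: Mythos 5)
The paper itself contains no proof of this statement: it is imported verbatim from Theorem~9.2(ii) of the cited book of Dress et al., so your proposal can only be judged on its own merits. Most of it does hold up. The $\mathrm{mrca}$ reformulation of $\cR_T$, the restriction compatibility $\cR_{T|_Y}=\cR_T\cap\cR(Y)$, the necessity of ($\cR$1) and ($\cR$2), and the uniqueness argument (the components of the Aho graph $G(\cR)$ are forced to be the leaf sets of the subtrees below the root's children, then induct) are all correct, and you correctly isolate the two statements on which the entire sufficiency direction rests: (a) $G(\cR)$ is disconnected whenever $|X|\ge 2$, and (b) saturation, i.e.\ $ab|c\in\cR$ whenever $a,b$ lie in one component and $c$ in another.

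The genuine gap is your proposed proof of (a), which cannot be completed as described. The hypothesis you intend to refute --- a triplet $ab|c\in\cR$ together with a path in $G(\cR)$ from $\{a,b\}$ to $c$, under ($\cR$1) and ($\cR$2) --- is \emph{satisfiable}, so no amount of propagation can extract a ($\cR$1)-violation from it. Concretely, take the tree whose root $\rho$ has two children, a leaf $f$ and an interior vertex $v$, where $v$'s children are the leaves $c,d$ and a cherry on $\{a,b\}$: its triplet system satisfies both axioms (by your own necessity argument), it contains $ab|c$, and $\{b,c\}$ is already an edge of $G$ (witnessed by $bc|f$), giving a one-edge path from $b$ to $c$. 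The obstruction to connectivity is genuinely global: for every tree system each component of $G(\cR_T)$ is a clique, so triplets and paths inside a component never clash; a contradiction can only come from the assumption that \emph{all} of $X$ --- including witnesses such as $f$ above --- lies in a single component, and a path from the triplet to $c$ does not encode that.

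A repair that stays inside your framework runs as follows. First prove transitivity: $xv|c,\,vy|c\in\cR$ imply $xy|c\in\cR$. (Apply ($\cR$2) to $xv|c$ with fourth element $y$ to get $xy|c$ or $xv|y$; in the latter case apply ($\cR$2) to $yv|c$ with fourth element $x$ --- note you must exploit the freedom to put $y$, not $v$, in the role of ``$a$'' --- to get $xy|c$ or $yv|x$, and $yv|x$ together with $xv|y$ violates ($\cR$1).) Hence, for fixed $c$, ``equal to or separated from $c$'' is an equivalence relation on $X\setminus\{c\}$. Now choose, over all $c\in X$, an equivalence class $C=[u]_c$ of \emph{maximum} cardinality. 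If some edge of $G(\cR)$ left $C$, a short case analysis with ($\cR$2) shows that either ($\cR$1) is violated (when the relevant witness lies in $C$), or the whole of $C\cup\{c\}$ is contained in a single class $[u]_z$ for some $z$, contradicting maximality. So no edge leaves $C$, and since $c\notin C$, $G(\cR)$ is disconnected. Ironically, your path-propagation idea is exactly the correct proof of (b), which you left unargued: if $c$ lies in a different component from a path $v_0,\dots,v_l$, then ($\cR$2) applied to each edge's witness with fourth element $c$ forces $v_iv_{i+1}|c\in\cR$ (the alternative would make $c$ adjacent to the path), and transitivity then gives $v_0v_l|c\in\cR$. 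With these two lemmas in place, your recursion and its verification go through.
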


To prove Theorem~\ref{thm:topology}
we will also use the 
following rather technical result:

\begin{lemma}
\label{prop:2k-2-unique}
Let \(k \geq 3\) be an integer, \(X\) a set with \(|X| = 2k-2\)
and \((T_1,\rho_1,\omega_1)\) and \((T_2,\rho_2,\omega_2)\) be two generic 
weighted, rooted phylogenetic trees on \(X\) 
with \(\omega_i\) equidistant and interior-positive, \(i \in \{1,2\}\).
If \(D^{k}_{(T_1,\omega_1)} = D^{k}_{(T_2,\omega_2)}\) then \(T_1\) and \(T_2\)
are isomorphic as unweighted, rooted phylogenetic trees.
\end{lemma}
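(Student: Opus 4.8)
The plan is to reduce the statement to a purely combinatorial one about triplet systems and then read the topology off the map $D^k_{(T,\omega)}$ directly. By Theorem~\ref{thm:triplets}, a rooted phylogenetic tree on $X$ is determined up to isomorphism by its triplet system, and two trees $T_1,T_2$ on the same leaf set $X$ are isomorphic as soon as $\cR_{T_1}=\cR_{T_2}$. Since $D^k_{(T_1,\omega_1)}=D^k_{(T_2,\omega_2)}$ by hypothesis, it therefore suffices to show that, for every generic equidistant tree $T$, the triplet $T$ displays on any three distinct leaves $a,b,c$ is a function of the map $D^k_{(T,\omega)}$ alone. Note that for $|X|=2k-1$ this would follow from the full weighted uniqueness in Theorem~\ref{thm:k-diss-root-unique}(ii); the point of the lemma is that we are exactly one leaf short of that bound, so genericity must be used to compensate.

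The main tool will be an explicit description of $D^k_{(T,\omega)}$ for equidistant $\omega$ (normalising leaves to height $0$, so that each edge $e=(c,p)$ has weight $h(p)-h(c)$). Reading off the minimal subtree $T|_A$ of a $k$-set $A$ bottom-up and counting, at each height $t$, the number of ancestral lineages of $A$ still distinct, one obtains
\[
D^k_{(T,\omega)}(A)=2\,m_{k-1}+\sum_{i=1}^{k-2}m_i,
\]
where $m_1\le\cdots\le m_{k-1}$ are the heights of the $k-1$ interior vertices of $T|_A$ and $m_{k-1}=h_A$ is the height of the root of $T|_A$. Equivalently, $D^k_{(T,\omega)}$ is the sum over edges $e$ of $\omega(e)$ times the indicator that $A$ meets both sides of the split induced by $e$. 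From this I extract the pairwise ultrametric $u_{ab}:=h(\mathrm{LCA}(a,b))$, whose strict comparisons (guaranteed distinct by genericity) encode precisely which of $ab|c,\ ac|b,\ bc|a$ holds.

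The mechanism for isolating the $u$-values is a controlled swap. Since $k\ge 3$ gives $|X\setminus\{a,b,c\}|=2k-5\ge k-2$, one may fix a $(k-2)$-subset $C\subseteq X\setminus\{a,b,c\}$ and compare the three $k$-sets $C\cup\{a,b\}$, $C\cup\{a,c\}$, $C\cup\{b,c\}$. Because each pair shares the common $(k-1)$-set obtained by deleting one of $a,b,c$, the difference of the two corresponding $D^k$-values equals the difference of the ``insertion costs'' of the two swapped leaves into the fixed subtree $T|_{C\cup\{\cdot\}}$; when the swapped leaf attaches strictly below the current root, that cost is exactly the height at which its lineage meets the subtree. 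Choosing $C$ so that $a$ is the closest leaf to both $b$ and $c$ among $C\cup\{a\}$, these meeting heights become $u_{ab}$ and $u_{ac}$, so that $D^k(C\cup\{a,b\})-D^k(C\cup\{a,c\})=u_{ab}-u_{ac}$ and its sign recovers the triplet.

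The hard part will be precisely this choice of comparison sets. The $k-2$ ``free'' leaves in $C$ contribute their own merge heights to the formula, and these must be made to cancel; moreover the insertion-cost formula branches according to whether the swapped leaf attaches inside the current subtree or above its root, forcing a short case analysis. Worse, a suitable $C$ (witnessing that $a,b,c$ cluster relative to $C$) need not exist for every labelling of the three leaves, so the triplet test must be run adaptively over admissible choices of $C$, and one must certify from the $D^k$-data alone which comparisons are ``clean.'' My plan is to handle this by organising the three leaves so that the comparison is always made against their mutually closest configuration---equivalently, by peeling from the lowest merges upward, using genericity to guarantee a unique minimiser at each stage---and to verify that the regime $|X|=2k-2$ (complement size exactly $k-2$) is precisely what makes enough such clean comparisons available. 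Once every triplet of $T$ is shown to be determined by $D^k_{(T,\omega)}$, equality of the two maps forces $\cR_{T_1}=\cR_{T_2}$, and Theorem~\ref{thm:triplets} yields $T_1\cong T_2$, as required.
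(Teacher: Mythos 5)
Your comparison mechanism --- fixing a $(k-2)$-set $C$ and comparing $D^k(C\cup\{a,b\})$, $D^k(C\cup\{a,c\})$, $D^k(C\cup\{b,c\})$, with genericity breaking ties --- is exactly the tool the paper uses, so you have identified the right local test. But your overall plan is to run this test on \emph{every} triple $\{a,b,c\}\subseteq X$ and thereby recover the full triplet system $\cR_T$, and this is where there is a genuine gap. The test is only ``clean'' when $C$ can be chosen so that the lineages of $b$ and $c$ meet $T|_{C\cup\{a\}}$ at the heights $u_{ab}$ and $u_{ac}$, i.e.\ no element of $C$ is strictly closer (in the ultrametric sense) to $b$ than $a$ is, nor strictly closer to $c$ than $a$ is. Such a $C$ can fail to exist: take $k=4$, $X=\{a,b,c,d,e,f\}$ with topology $\bigl(c,\bigl(a,\bigl((b,d),(e,f)\bigr)\bigr)\bigr)$; then every element of $X\setminus\{a,b,c\}=\{d,e,f\}$ has a lower last common ancestor with $b$ than $a$ does, so no admissible $C$ of size $2$ exists for the triple $\{a,b,c\}$ with base point $a$ (and permuting the roles of $a,b,c$ does not obviously rescue the situation). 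You acknowledge this and propose to ``run the test adaptively over admissible choices of $C$'' and to ``certify from the $D^k$-data alone which comparisons are clean,'' but that unspecified procedure \emph{is} the proof; without it the argument does not close.

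The paper avoids the problem by not attempting to read off arbitrary triplets. If some $T_i$ has a vertex whose subtree has exactly $k-1$ leaves $B$, it first shows (using genericity) that $B$ is a clade of the other tree too, then determines the triplet systems of $T_i|_A$ and $T_i|_B$ \emph{separately} with comparison sets $C$ drawn entirely from the opposite block --- which is precisely what keeps the structure of $T|_{C\cup\{a,b,c\}}$ controlled enough for a finite case analysis --- and finally pins down where $T|_B$ attaches to $T|_A$ via the set of $a\in A$ minimizing $D^k(B\cup\{a\})$. If no such $(k-1)$-leaf clade exists, every edge of the unrooted tree $T(M)$ splits off at least $k$ leaves, and the Pachter--Speyer uniqueness argument applies verbatim even though $|X|=2k-2$. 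Your proposal contains neither the clade decomposition that makes the local test sound nor any substitute for this second case, so as written it is incomplete.
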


\begin{proof}
We distinguish two cases.
First consider the case that at least one of \(T_1\) and \(T_2\), say \(T_1\),
contains a vertex \(v_1\) such that the set $B$ of leaves of the rooted subtree \((T_1)_{v_1}\) 
has cardinality \(k-1\). Define \(A := X \setminus B\).
We claim that \(T_2\) must contain a vertex \(v_2\)
such that the set of leaves of \((T_2)_{v_2}\) is \(B\).
To establish this, first note that 
\(D^k_{(T_1,\omega_1)}(A \cup \{b\}) = D^k_{(T_1,\omega_1)}(A \cup \{b'\})\)
and, therefore, in view of \(D^{k}_{(T_1,\omega_1)} = D^{k}_{(T_2,\omega_2)}\),
also \(D^k_{(T_2,\omega_2)}(A \cup \{b\}) = D^k_{(T_2,\omega_2)}(A \cup \{b'\})\)
must hold for all \(b,b' \in B\). This implies
that, for every \(b \in B\), the height of the
vertex \(w_b\) where the path from \(b\) to the
root of \(T_2\) first meets the subtree \(T_2|_A\)
must be the same. Hence, since \(T_2\) is generic, 
\(w_b = w_{b'}\) holds for all \(b,b' \in B\).
But then the tree \(T_2|_{B}\) must equal the
tree \((T_2)_{v_2}\) for some vertex \(v_2\) of \(T_2\),
as claimed.

Next, we claim that \(T_1|_A\) and \(T_2|_A\)
as well as \(T_1|_B\) and \(T_2|_B\) are isomorphic
as unweighted, rooted phylogenetic trees. By Theorem~\ref{thm:triplets}
it suffices to show that \(\cR(T_1|_A) = \cR(T_2|_A)\)
and \(\cR(T_1|_B) = \cR(T_2|_B)\) holds. 
In the following we will focus on the set~\(A\). 
A completely analogous argument  
yields \(\cR(T_1|_B) = \cR(T_2|_B)\).
So, consider three arbitrary distinct elements
\(a\), \(b\) and \(c\) in \(A\) and an arbitrary
\((k-2)\)-element subset \(C\) of \(B\). Up to
relabeling, Figure~\ref{figure:proofpatch}
depicts the possible cases for the structure
of the tree \(T_1|_{C \cup \{a,b,c\}}\). 
Note that, by the assumption that \(T_1\) is generic,
cases (b), (d), (g), (i), (j) and (k) are ruled out.
In the remaining cases we have:
\begin{itemize}
\item[(a)]
\(D^k_{(T_1,\omega_1)}(C \cup \{a,b\}) 
= D^k_{(T_1,\omega_1)}(C \cup \{a,c\})
< D^k_{(T_1,\omega_1)}(C \cup \{b,c\})\)
\item[(c)] (and, similarly, (e) and (f))\\
\(D^k_{(T_1,\omega_1)}(C \cup \{b,c\})
< D^k_{(T_1,\omega_1)}(C \cup \{a,b\}) 
= D^k_{(T_1,\omega_1)}(C \cup \{a,c\})\)
\item[(h)]
\(D^k_{(T_1,\omega_1)}(C \cup \{b,c\})
< D^k_{(T_1,\omega_1)}(C \cup \{a,c\})
< D^k_{(T_1,\omega_1)}(C \cup \{a,b\})\)
\end{itemize}
So, we have \(ab|c \in \mathcal{R}(T_1|_A)\)
if and only if either
\begin{itemize}
\item[(1)]
\(D^k_{(T_1,\omega_1)}(C \cup \{a,c\}) 
= D^k_{(T_1,\omega_1)}(C \cup \{b,c\})
\neq D^k_{(T_1,\omega_1)}(C \cup \{a,b\})\) holds, or
\item[(2)]
\(D^k_{(T_1,\omega_1)}(C \cup \{a,b\})\),
\(D^k_{(T_1,\omega_1)}(C \cup \{a,c\})\) and
\(D^k_{(T_1,\omega_1)}(C \cup \{b,c\})\) are pairwise
distinct and \(D^k_{(T_1,\omega_1)}(C \cup \{a,b\})\)
is the smallest value among them.
\end{itemize}
But this implies, in view of the fact that
\(D^k_{(T_1,\omega_1)} = D^k_{(T_2,\omega_2)}\)
holds, that we have 
\(\mathcal{R}(T_1|_A) = \mathcal{R}(T_2|_A)\), as required.

\begin{figure}
\centering
\includegraphics[scale=1.0]{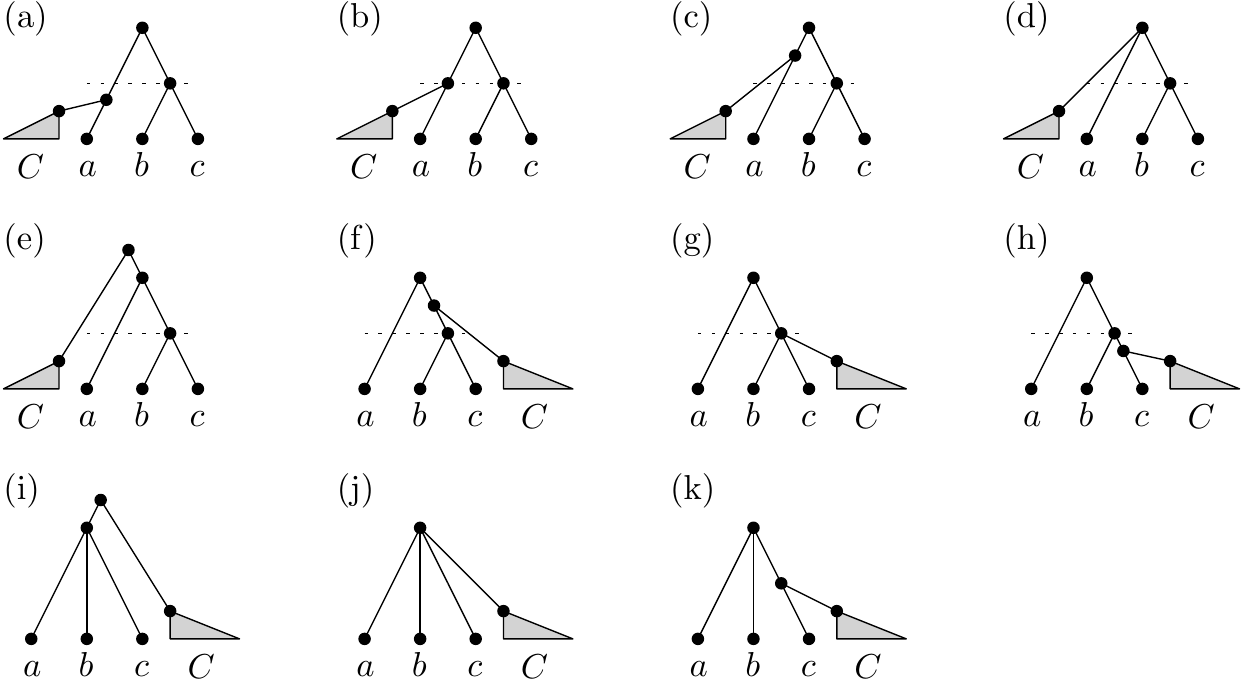}
\caption{Schematic representations of the cases considered
         in the proof of Lemma~\ref{prop:2k-2-unique} in the context of
         establishing that \(\cR(T_1|_A) = \cR(T_2|_A)\) holds.}
\label{figure:proofpatch}
\end{figure}

Now, to show that the trees \(T_1\) and \(T_2\)
are isomorphic as unweighted, rooted phylogenetic trees, 
it remains to show that \(T_1|_B\) and \(T_2|_B\) are attached to
\(T_1|_A\) and \(T_2|_A\), respectively, at the same
position. To establish this, consider the set
\(A^*\) containing all \(a \in A\)
such that \(D^k_{(T_1,\omega_1)}(B \cup \{a\})\) is minimal.
Note that there must exist vertices \(w_1\) in \(T_1\) and
\(w_2\) in \(T_2\) such that \(T_1|_{A^*} = (T_1)_{w_1}\) and
\(T_2|_{A^*} = (T_2)_{w_2}\) hold. Moreover, note that \(v_1\) and \(w_1\)
as well as \(v_2\) and \(w_2\) must be adjacent to
a common vertex, namely the vertex where \(T_1|_B\) and \(T_2|_B\) 
are attached to
\(T_1|_A\) and \(T_2|_A\), respectively. But this
implies that \(T_1\) and \(T_2\)
are isomorphic as unweighted, rooted phylogenetic trees.

Next consider the case that neither \(T_1\) nor \(T_2\)
contains a vertex such that the subtree induced by that
vertex has precisely \(k-1\) leaves. Choose some
\(M \in \mathbb{R}\) large enough 
and consider the weighted phylogenetic trees 
\((T_1(M),\omega_1(M))\) and \((T_2(M),\omega_2(M))\).
Note that, for every edge \(e\) of \(T_j(M)\), \(j \in \{1,2\}\),  
removing \(e\) from \(T_j(M)\) yields two subtrees, 
one of which has at least \(k\) leaves. 
But this is the crucial property used in the proof
of Theorem~\ref{thm:k-diss-unique} 
presented 
in \cite{pac-spe-04a}, and the lower bound \(|X| \geq 2k-1\) 
stated in this theorem is only needed to ensure 
that this property holds.
Hence, even in case \(|X| = 2k-2\) the proof can be applied
as long as we have this property. This implies that
\(T_1(M)\) and \(T_2(M)\) are isomorphic
(even as weighted phylogenetic trees!) and, hence, also 
\(T_1\) and \(T_2\), as required.
\end{proof}

\begin{proof}[Proof of {Theorem~\ref{thm:topology}}]
Define $\cR:=\bigcup_{Y\in\binom X{2k-1}} \cR_{T_Y}$. 
We have to show that  Conditions~($\cR$1)
and~($\cR$2) hold for $\cR$.

To show that ($\cR$1) holds, it suffices to show that for 
all $Z\in\binom X{2k-2}$, and distinct $x,y\in X \setminus Z$,
$a,b,c\in Z$ at most one of the triplets $ab|c$, $bc|a$ 
and $ca|b$ is contained in $\cR_{T_{Z\cup\{x\}}}\cup\cR_{T_{Z\cup\{y\}}}$. 
Lemma~\ref{prop:2k-2-unique} implies
that the phylogenetic trees $T^\star:=T_{\{Z\cup\{x\}\}}|_Z$ 
and $T_{Z\cup\{y\}}|_Z$ are isomorphic. By 
Theorem~\ref{thm:triplets}, Condition~($\cR$1) 
holds for $\cR_{T^\star}$, as required.

We now show that ($\cR$2) holds. Let $a,b,c,d$ be
distinct elements of $X$
and suppose $ab|c\in \cR$. Then there exists some 
$Y\in\binom X{2k-1}$ such that $ab|c\in \cR_{T_Y}$. 
If $d\in Y$, then we have $ad|c\in\cR$ or $ab|d\in\cR$ 
since ($\cR$2) holds for $\cR_{T_Y}$. If $d\not\in Y$, 
take some $x\in Y\setminus\{a,b,c\}$ and define 
$Y':=(Y\setminus\{x\})\cup\{d\}$. Again, 
by Lemma~\ref{prop:2k-2-unique}, 
the phylogenetic trees $T_{Y}|_{Y\setminus\{x\}}$ 
and $T_{Y'}|_{Y\setminus\{x\}}$ are isomorphic, 
hence $ab|c$ is also an element of $\cR_{T_{Y'}}$ 
and hence  $ad|c\in\cR$ or $ab|d\in\cR$ since ($\cR$2) 
holds for $\cR_{T_{Y'}}$.
\end{proof}

\begin{remark}
We suspect, but have not been able to prove, 
that, for $k\ge 3$,  if a $k$"=dissimilarity on 
$X$ is induced by an {\em arbitrary} 
weighted, rooted phylogenetic tree $(T_Y,\rho_Y,\omega_Y)$ 
with \(\omega_Y\) equidistant and interior-positive
for all  $Y\in\binom X{2k-1}$, 
then it determines a rooted phylogenetic tree $T$ on 
$X$ such that, for all $Y\in\binom X{2k-1}$, 
the tree $T_Y$ is isomorphic to $T|_Y$.
Furthermore, depending on the topology of the 
unweighted phylogenetic tree $T$ arising in this way, it might 
even still be possible to 
assign weights to the edges of $T$ so that the edge-weighting
is equidistant and the induced \(k\)-dissimilarity is~$D$. 
For example, if the  
number of leaves in one of the subtrees induced by
the vertices adjacent to the root of $T$ is smaller than $k$,
then one can extend the arguments 
in the proof of Theorem~\ref{thm:2k-treelike} to 
show that one can indeed construct a suitable edge-weighting 
for $T$, and hence $D$ is equidistant in this case.
\end{remark}

\section{3-dissimilarities}\label{sec:k=3}

In this section, we prove that treelike
and equidistant 3-dissimilarities 
can be characterized by certain 6-point conditions.
We begin by recalling some 
conditions for  characterizing treelike and
equidistant 2-dissimilarities (see e.g.  
\cite{smo-62,zar-65,bun-71,gor-87}; and \cite{sem-ste-03a}).

It is well-known that a 
2-dissimilarity  \(D\) on~$X$ is treelike 
if and only if \(D\) is non-negative, it
satisfies the \emph{triangle inequality} (i.e., 
\(D(x_1,x_3) \leq D(x_1,x_2) + D(x_2,x_3)\) holds for any
three distinct elements \(x_1,x_2,x_3 \in X\)) and
\begin{equation}
\label{equation:four:point:condition}
D(x,x')+D(y,y') \le \max\{D(x,y)+D(x',y'),D(x,y')+D(x',y)\}
\end{equation}
holds for any four distinct $x,x',y,y' \in X$. 
Similarly, it is known that $D$ is equidistant if and only if 
\begin{equation}
\label{equation:ultrametric:condition}
D(x,y) \le \max\{D(x,z),D(z,y)\}
\end{equation}
holds for any three distinct $x,y,z \in X$.

Inequalities (\ref{equation:four:point:condition}) and
(\ref{equation:ultrametric:condition}) are commonly called the 
{\em 4-point} and {\em ultrametric} conditions, respectively.
Note that non-negativity of $D$ and the triangle
inequality follow from the 4-point condition
if one defines \(D(a,a) = 0\) for all
\(a \in X\) and then drops the requirement that
the elements are pairwise distinct. However,
as we view a \(k\)-dissimilarity as being a
map from \(\binom{X}{k}\) into \(\mathbb{R}\),
we need to explicitly require
these additional properties.

We now present similar 
conditions that characterize 
treelike/equidistant \(3\)"=dissimilarities that are obtained
by associating with every \(3\)"=dissimilarity a suitable
\(2\)"=dissimilarity. The construction of this \(2\)"=dissimilarity
is similar to the approach followed in the context of 
so-called \emph{perimeter models} considered, for example, in
\cite{hei-ben-97} and \cite{che-fic-07}.

\begin{theorem}\label{thm:6-point-condition}
Let $D$ be a \(3\)"=dissimilarity on 
a set \(X\) with \(|X| \geq 5\).
\begin{enumerate}
\item[(i)] \(D\) is treelike if and only if for all 
$\{a,b,c,d,e\} \in \binom{X}{5}$
\begin{equation}
\label{treelike:non:negative}
\begin{split}
 D(a,c,d)+D(a,c,e)+D(a,d,e)+D(b,c,d)+D(b,c,e)+D(b,d,e)\\
\leq 2\left( D(a,b,c)+D(a,b,d)+D(a,b,e)+D(c,d,e)\right),
\end{split}
\end{equation}
\begin{align}
\label{treelike:from:triangle:inequality}
\begin{split}
2\left(D(a,c,d) + D(a,c,e) + D(b,d,e)\right) 
\leq D(a,b,c) + D(a,b,d) + D(a,b,e)\\
+D(a,d,e) + D(b,c,d) + D(b,c,e) + D(c,d,e), 
\end{split}
\end{align}
\begin{equation}
\label{treelike:from:4:point}
D(a,c,d) + D(a,c,e) + D(b,d,e) \leq \max \left \{
\begin{matrix}
D(a,b,d) + D(a,b,e) + D(c,d,e)\\
D(a,d,e) + D(b,c,d) + D(b,c,e)
\end{matrix}
\right \}\,,
\end{equation}
and for all $\{a,b,c,d,e,e'\} \in \binom{X}{6}$
\begin{align}
\label{forced:equality}
\begin{split}
2D(a,b,e)-D(a,c,e)-D(a,d,e)-D(b,c,e)-D(b,d,e)+2D(c,d,e) = \\
2D(a,b,e')-D(a,c,e')-D(a,d,e')-D(b,c,e')-D(b,d,e')+2D(c,d,e').
\end{split}
\end{align}
\item[(ii)] $D$ is equidistant if and only if 
for all $\{a,b,c,d,e\} \in \binom{X}{5}$
\begin{equation}
\label{ultrametric:five:point}
D(a,b,e) + D(c,d,e) \leq \max \left \{
\begin{matrix}
D(a,c,e) + D(b,d,e) \\
D(a,d,e) + D(b,c,e) 
\end{matrix}
\right \}
\end{equation}
and for all $\{a,b,c,d,e,e'\} \in \binom{X}{6}$ 
Equation~\eqref{forced:equality} holds.
\end{enumerate}
\end{theorem}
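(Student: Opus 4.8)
The plan is to reduce both parts to the classical $2$-dissimilarity characterizations recalled in \eqref{equation:four:point:condition} and \eqref{equation:ultrametric:condition} via the \emph{perimeter identity}. If $D=D^3_{(T,\omega)}$, then the minimal subtree spanning three leaves $x,y,z$ is a tripod with some centre $m$, so
\[
D(x,y,z)=d_{(T,\omega)}(x,m)+d_{(T,\omega)}(y,m)+d_{(T,\omega)}(z,m)=\tfrac12\bigl(d(x,y)+d(x,z)+d(y,z)\bigr),
\]
where $d:=d_{(T,\omega)}$ is the induced $2$-dissimilarity. Thus a $3$-dissimilarity is treelike exactly when it is the ``perimeter'' of a treelike $2$-dissimilarity $\delta$, and equidistant exactly when $\delta$ is in addition an ultrametric. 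The proof therefore splits into two tasks: (a) characterise when $D$ is the perimeter of \emph{some} $2$-dissimilarity $\delta$, and (b) translate the classical conditions on $\delta$ into conditions on $D$.

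For task (a) I would first record the elementary linear identity
\[
D(a,b,c)+D(a,b,d)-D(a,c,d)-D(b,c,d)=\delta(a,b)-\delta(c,d),
\]
valid for any perimeter, which shows that $D$ determines every \emph{difference} $\delta(a,b)-\delta(c,d)$, and hence (combined with a single equation $\delta(a,b)+\delta(a,c)+\delta(b,c)=2D(a,b,c)$ to fix the additive constant) determines $\delta$ uniquely once $|X|\ge 4$. The obstruction to patching these differences into a consistent $\delta$ is a cocycle-type compatibility condition, and I expect it to be captured \emph{exactly} by \eqref{forced:equality}: for a genuine perimeter the two sides of \eqref{forced:equality} are manifestly independent of the apex element $e$, and conversely this independence is precisely what lets the local differences be realised by one $2$-dissimilarity $\delta$ with $D(x,y,z)=\tfrac12(\delta(x,y)+\delta(x,z)+\delta(y,z))$ for \emph{every} triple. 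The hypotheses $|X|\ge 5$ and $|X|\ge 6$ guarantee enough room to form the five- and six-element relations involved.

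For task (b), necessity is routine term-cancellation. Substituting the perimeter identity into each displayed inequality and cancelling the common terms reduces \eqref{treelike:non:negative}, \eqref{treelike:from:triangle:inequality} and \eqref{treelike:from:4:point} to non-negativity, the triangle inequality, and the four-point condition \eqref{equation:four:point:condition} for $\delta$, respectively, while \eqref{ultrametric:five:point} reduces to the ultrametric condition \eqref{equation:ultrametric:condition}; and \eqref{forced:equality} holds by the reference-independence noted above. For sufficiency I would run this backwards: assuming all the stated conditions, task~(a) produces the associated $\delta$, and the remaining inequalities certify that $\delta$ is a non-negative $2$-dissimilarity satisfying \eqref{equation:four:point:condition} (resp.\ \eqref{equation:ultrametric:condition}). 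By the known $k=2$ characterizations there is then a weighted phylogenetic tree (rooted with equidistant weighting, in part~(ii)) $(T,\omega)$ with $d_{(T,\omega)}=\delta$, and the perimeter identity yields $D^3_{(T,\omega)}=D$, as required.

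I expect the main obstacle to be task~(a): proving that \eqref{forced:equality} is genuinely the integrability condition needed to glue the reference-dependent difference relations into one globally consistent $\delta$, and then verifying that this $\delta$ reproduces $D$ on \emph{all} triples rather than only those meeting a fixed set of reference points. The second delicate point is the bookkeeping in task~(b): checking that the three five-point inequalities of part~(i) collectively encode precisely ``non-negative metric obeying the four-point condition'', neither more nor less, which I would settle by the explicit cancellation computations indicated above.
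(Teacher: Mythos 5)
Your proposal is correct and follows essentially the same route as the paper: associate to $D$ (locally, on $5$-element subsets) the unique $2$-dissimilarity $\delta$ satisfying the half-perimeter identity $2D(x,y,z)=\delta(x,y)+\delta(x,z)+\delta(y,z)$, use \eqref{forced:equality} as the consistency condition that glues these local solutions into one global $\delta$, and then translate non-negativity, the triangle inequality, the $4$-point condition and the ultrametric condition on $\delta$ into \eqref{treelike:non:negative}--\eqref{treelike:from:4:point} and \eqref{ultrametric:five:point}. The paper carries out your ``task (a)'' by exhibiting the explicit $10\times 10$ perimeter matrix $A$ and its inverse on each $5$-set (so existence and uniqueness of $\delta_Y$ are immediate), which is just a concrete packaging of your difference-plus-normalization argument.
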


\begin{proof}
For any \(Y = \{a,b,c,d,e\} \in \binom{X}{5}\) we define
a map \(\delta_Y:\binom{Y}{2} \rightarrow \mathbb{R}\) as
follows. First define the vector
\[
v_Y := (D(a,b,c),D(a,b,d),D(a,b,e),D(a,c,d),\dots,D(c,d,e))^T
\]
as well as the following matrix and its inverse (note
that $A$ has full rank):
\begin{center}
\scriptsize
\(
A = 
\begin{pmatrix}
1 & 1 & 0 & 0 & 1 & 0 & 0 & 0 & 0 & 0\\
1 & 0 & 1 & 0 & 0 & 1 & 0 & 0 & 0 & 0\\
1 & 0 & 0 & 1 & 0 & 0 & 1 & 0 & 0 & 0\\
0 & 1 & 1 & 0 & 0 & 0 & 0 & 1 & 0 & 0\\
0 & 1 & 0 & 1 & 0 & 0 & 0 & 0 & 1 & 0\\
0 & 0 & 1 & 1 & 0 & 0 & 0 & 0 & 0 & 1\\
0 & 0 & 0 & 0 & 1 & 1 & 0 & 1 & 0 & 0\\
0 & 0 & 0 & 0 & 1 & 0 & 1 & 0 & 1 & 0\\
0 & 0 & 0 & 0 & 0 & 1 & 1 & 0 & 0 & 1\\
0 & 0 & 0 & 0 & 0 & 0 & 0 & 1 & 1 & 1\\
\end{pmatrix}
\)
\quad \quad \quad
\(
A^{-1} = \frac{1}{6} \cdot
\begin{pmatrix}
2 & 2 & 2 & -1 & -1 & -1 & -1 & -1 & -1 & 2\\
2 & -1 & -1 & 2 & 2 & -1 & -1 & -1 & 2 & -1\\
-1 & 2 & -1 & 2 & -1 & 2 & -1 & 2 & -1 & -1\\
-1 & -1 & 2 & -1 & 2 & 2 & 2 & -1 & -1 & -1\\
2 & -1 & -1 & -1 & -1 & 2 & 2 & 2 & -1 & -1\\
-1 & 2 & -1 & -1 & 2 & -1 & 2 & -1 & 2 & -1\\
-1 & -1 & 2 & 2 & -1 & -1 & -1 & 2 & 2 & -1\\
-1 & -1 & 2 & 2 & -1 & -1 & 2 & -1 & -1 & 2\\
-1 & 2 & -1 & -1 & 2 & -1 & -1 & 2 & -1 & 2\\
2 & -1 & -1 & -1 & -1 & 2 & -1 & -1 & 2 & 2\\
\end{pmatrix}
\)
\end{center}
Then, using the notation 
\[
u_Y = (\delta_Y(a,b),\delta_Y(a,c),\delta_Y(a,d),
\delta_Y(a,e),\delta_Y(b,c),\dots,\delta_Y(d,e))^T,
\]
the map \(\delta_Y\) is defined by the unique
solution of the system of linear equations
\begin{equation}
\label{equation:relating:2:and:3}
2 v_Y = A \cdot u_Y.
\end{equation}
In particular we have:
\begin{align}
3 \delta_Y(a,b) &= 2D(a,b,c)+2D(a,b,d)+2D(a,b,e) -D(a,c,d)-D(a,c,e)-D(a,d,e)\notag\\
            &\qquad         -D(b,c,d)-D(b,c,e)-D(b,d,e)+2D(c,d,e), \notag\\
3 \delta_Y(a,c) &= 2D(a,b,c)-D(a,b,d)-D(a,b,e) +2D(a,c,d)+2D(a,c,e)-D(a,d,e)\notag\\
            &\qquad         -D(b,c,d)-D(b,c,e)+2D(b,d,e)-D(c,d,e), \notag\\
3 \delta_Y(a,d) &= -D(a,b,c)+2D(a,b,d)-D(a,b,e) +2D(a,c,d)-D(a,c,e)+2D(a,d,e) \notag\\
            &\qquad         -D(b,c,d)+2D(b,c,e)-D(b,d,e)-D(c,d,e), \label{translate:from:delta}\\
3 \delta_Y(b,c) &= 2D(a,b,c)-D(a,b,d)-D(a,b,e) -D(a,c,d)-D(a,c,e)+2D(a,d,e)\notag\\
            &\qquad         +2D(b,c,d)+2D(b,c,e)-D(b,d,e)-D(c,d,e), \notag\\
3 \delta_Y(b,d) &= -D(a,b,c)+2D(a,b,d)-D(a,b,e) -D(a,c,d)+2D(a,c,e)-D(a,d,e)\notag\\
            &\qquad         +2D(b,c,d)-D(b,c,e)+2D(b,d,e)-D(c,d,e), \notag\\
3 \delta_Y(c,d) &= -D(a,b,c)-D(a,b,d)+2D(a,b,e) +2D(a,c,d)-D(a,c,e)-D(a,d,e)\notag\\
            &\qquad         +2D(b,c,d)-D(b,c,e)-D(b,d,e)+2D(c,d,e).\notag
\end{align}
It is not hard to see that \(D|_{Y}\) 
is treelike/equidistant
if and only if \(\delta_Y\) is treelike/equidistant.
This is the key observation that 
will allow us to translate
the 4-point/ultrametric condition 
characterizing when a 
2"=dissimilarity is treelike/equidistant
into conditions for when a 
\(3\)"=dissimilarity is  treelike/equidistant.

Before we do this, we
need some condition that ensures that, for any two
distinct \(a,b \in X\) and any two distinct 
\(Z,Z' \in \binom{X}{5}\) with \(\{a,b\} \subseteq Z \cap Z'\), we have 
\(\delta_{Z}(a,b) = \delta_{Z'}(a,b)\).
Clearly, it suffices to consider such sets \(Z,Z' \in \binom{X}{5}\) with
\(|Z \cap Z'| = 4\). In particular, for \(Z=\{a,b,c,d,e\}\) 
and \(Z' = (Z \setminus \{e\}) \cup \{e'\}\)
we obtain Equation~(\ref{forced:equality}) which ensures that 
the map \(\delta:\binom{X}{2} \rightarrow \mathbb{R}\)
defined by putting \(\delta(a,b) := \delta_Z(a,b)\) for an
arbitrary \(Z \in \binom{X}{5}\) with \(\{a,b\} \subseteq Z\) 
is well-defined. And in this case 
\(\delta_Z\) is treelike/equidistant
for all 
\(Z \in \binom{X}{5}\) if and only if \(\delta\) 
is treelike/equidistant
if and only if \(D\) is treelike/equidistant. 

We now prove the two assertions of the theorem:
(i) 
Using the 
equations in~(\ref{translate:from:delta}),
it is not hard to check that 
the conditions for \(\delta\) (to be non-negative,
to satisfy the triangle inequality and the
4-point condition) translate into
Inequalities~(\ref{treelike:non:negative}),
(\ref{treelike:from:triangle:inequality}) and 
(\ref{treelike:from:4:point}), respectively.
(ii)
Again it is not hard to check that, using the 
equations in~(\ref{translate:from:delta}), 
the ultrametric condition on \(\delta\) translates into 
Inequality~(\ref{ultrametric:five:point}).
\end{proof}

Note that \citet[Theorem~3.2]{boc-col-09a}   
showed that there exists a family of maps $\phi_k$ from 
the set of all treelike 2-dissimilarities to the set of all 
treelike $k$"=dissimilarities that maps a
2-dissimilarity $D^2_{(T,\omega)}$ induced by a 
weighted phylogenetic tree $(T,\omega)$ on \(X\) with \(\omega\)
non-negative to the treelike $k$"=dissimilarity 
$\phi_k(D^2_{(T,\omega)})=D^k_{(T,\omega)}$. 
For $k=3$ this map can be thought of as multiplication 
with the matrix $A$ as considered in 
the proof of Theorem~\ref{thm:6-point-condition}, 
but for $k\geq 4$ it appears that 
no such simple representation is possible.

Indeed, the key observation used in proving the above
result is that the restriction of any treelike/equidistant
$3$"=dissimilarity $D$ to every 5-element subset $Y \subseteq X$
can be related to a unique treelike/equidistant 2"=dissimilarity
on $Y$ by a system of linear equations
(as this allowed
the straight-forward translation of the 4-point/ultrametric condition
into a 5-point condition).
Unfortunately, it seems that there are
problems when we try to apply this idea in case 
$k \geq 4$, even for $k=4$. 

More specifically, first note that,
although the restriction of any treelike
$4$"=dissimilarity $D$ to every 6-element subset $Y \subseteq X$
can be related to a treelike 2"=dissimilarity on~$Y$ by
a system of linear equations, to do this one has to select a suitable
ordering of the elements in~$Y$ (see also \citet[Theorem~2.2]{boc-col-09a}). 
In contrast, in the case $k=3$ any ordering 
works. Moreover, the system of linear equations does not need
to have a unique solution, that is, even after fixing
a suitable ordering, there can be more than one
2"=dissimilarity on $Y$ associated to the $4$"=dissimilarity
$D$. This further complicates the translation of the 4-point
condition into some form of 
8-point condition for when $D$ is treelike.

\bibliographystyle{jclas}
\bibliography{k-dissimilarity}

\end{document}